\crefname{lemma}{Lemma}{Lemmas}
\crefname{corollary}{Corollary}{Corollaries}
\crefname{theorem}{Theorem}{Theorems}
\crefname{equation}{Equation}{Equations}
\crefname{example}{Example}{Examples}
\crefname{section}{Section}{Sections}
\crefname{subsection}{Section}{Sections}
\newcommand\rest[1][]{\EM{\upharpoonright_{#1}}}
\newcommand{\defn}[1]{{\textbf{#1}}}
\def\ZFC{\text{ZFC}}
\DeclareMathOperator{\dom}{dom}
\DeclareMathOperator{\range}{ran}
\DeclareMathOperator{\Rel}{Rel}
\DeclareMathOperator{\arity}{ar}
\DeclareDocumentCommand{\SeqSize}{d[] d()}
{
\EM{#1^{(#2)}}
}
\DeclareMathOperator{\SubStrop}{Sub}
\DeclareDocumentCommand{\SubStr}{d[] d()}
{
\IfNoValueTF{#1}
	{
        \EM{\SubStrop(#2)}
	}
	{
        \EM{\SubStrop_{#1}(#2)}
	}
}
\DeclareMathOperator{\SFop}{SF}
\DeclareDocumentCommand{\SF}{d[]}
{
\IfNoValueTF{#1}
	{
	\EM{\SFop}
	}
	{
	\EM{\SFop_{#1}}
	}
}
\DeclareMathOperator{\distmo}{\EM{d}}
\DeclareDocumentCommand{\dist}{d[]}
{
\IfNoValueTF{#1}
	{
	\EM{\distmo}
	}
	{
	\EM{\distmo_{#1}}
	}
}
\DeclareDocumentCommand{\paths}{d[]}
{
\IfNoValueTF{#1}
	{
	\EM{[\cdot]}
	}
	{
	\EM{[#1]}
	}
}
\DeclareDocumentCommand{\dual}{d()}
{
\IfNoValueTF{#1}
	{
	\EM{\hat{\ }}
	}
	{
	\EM{\hat{#1}}
	}
}
\DeclareDocumentCommand{\SizeAS}{d()}
{
\IfNoValueTF{#1}
	{
	\EM{|\cdot|}
	}
	{
	\EM{|#1|}
	}
}
\DeclareDocumentCommand{\compcK}{d[]}
{
\IfNoValueTF{#1}
	{
	\EM{\mathbb{K}}
	}
	{
	\EM{\mathbb{K}[#1]}
	}
}
\DeclareDocumentCommand{\AgeK}{d[]}
{
\IfNoValueTF{#1}
	{
	\EM{\mathbf{K}}
	}
	{
	\EM{\mathbf{K}[#1]}
	}
}
\DeclareDocumentCommand{\Rel}{d[]}
{
\IfNoValueTF{#1}
	{
	\EM{\mathcal{R}}
	}
	{
	\EM{\mathcal{R}_{#1}}
	}
}
\DeclareDocumentCommand{\Func}{d[]}
{
\IfNoValueTF{#1}
	{
	\EM{\mathcal{F}}
	}
	{
	\EM{\mathcal{F}_{#1}}
	}
}
\DeclareDocumentCommand{\ar}{d[]}
{
\IfNoValueTF{#1}
	{
	\EM{\arity}
	}
	{
	\EM{\arity_{#1}}
	}
}
\DeclareDocumentCommand{\Fn}{d<> d[] d()}
{
\IfNoValueTF{#3}
	{
	\EM{\textrm{Fn}(#1, #2)}
	}
	{
	\EM{\textrm{Fn}(#1, #2, #3)}
	}
}
\DeclareMathOperator{\Sym}{Sym}
\DeclareDocumentCommand{\Perm}{d()}
{
\EM{\Sym(#1)}
}
\newcommand{\cofinal}{\EM{\textrm{cf}}}
\newcommand{\dotminus}{\mathbin{\text{\@dotminus}}}
\newcommand{\@dotminus}{%
  \ooalign{\hidewidth\raise1ex\hbox{.}\hidewidth\cr$\m@th-$\cr}%
}
\DeclareMathOperator{\Lang}{\mathscr{L}}
\DeclareMathOperator{\levop}{lev}
\DeclareDocumentCommand{\lev}{m}
{
\levop(#1)
}
\DeclareMathOperator{\join}{join}
\def\bp{{\EM{\mathbf{p}}}}
\def\bx{{\EM{\mathbf{x}}}}
\def\by{{\EM{\mathbf{y}}}}
\def\cM{{\EM{\mathcal{M}}}}
\def\cN{{\EM{\mathcal{N}}}}
\def\cS{{\EM{\mathcal{S}}}}
\def\cT{{\EM{\mathcal{T}}}}
\def\cW{{\EM{\mathcal{W}}}}
\def\cX{{\EM{\mathcal{X}}}}
\DeclareDocumentCommand{\DistMonLang}{}
{
\Lang_{\textrm{dm}}
}
\DeclareDocumentCommand{\GenMetLang}{}
{
\Lang_{\textrm{gmet}}
}
\DeclareMathOperator{\ccop}{cc}
\DeclareDocumentCommand{\cc}{m}
{
\mbf{\ccop}(#1)
}
\DeclareMathOperator{\cseqop}{seq}
\DeclareDocumentCommand{\cseq}{m}
{
\mbf{\cseqop}(#1)
}
\DeclareDocumentCommand{\mcomp}{m}
{
\ol{#1}
}
\DeclareDocumentCommand{\coinit}{m}
{
\mathrm{coinit}(#1)
}
\DeclareDocumentCommand{\cofinal}{m}
{
\mathrm{cf}(#1)
}
\DeclareDocumentCommand{\nzero}{m}
{
{#1}^{+}
}
\def\w{\EM{\omega}}
\def\Reals{\EM{\mbb{R}}}
\def\^{\EM{{}^{\And}}}
\def\Or{\EM{\vee}}
\def\And{\EM{\wedge}}
\def\<{\EM{\langle}}
\def\>{\EM{\rangle}}
\def\EM#1{\ensuremath{#1}}
\def\mbb#1{\EM{\mathbb{#1}}}
\def\mbf#1{\EM{\mathop{\pmb{#1}}}}
\def\ol#1{\EM{\overline{#1}}}
\def\st{\,:\,}
\def\:{\colon}
\providecommand{\dotdiv}{
  \mathbin{
    \vphantom{+}
    \text{
      \mathsurround=0pt 
      \ooalign{
        \noalign{\kern-.35ex}
        \hidewidth$\smash{\cdot}$\hidewidth\cr 
        \noalign{\kern.35ex}
        $-$\cr 
      }%
    }%
  }%
}
\DeclareDocumentCommand{\RightJustify}{m}{\hspace*{\fill}\mbox{#1}\penalty-9999\relax}
\DeclareDocumentCommand{\DeclareCounter}{m}%
		\newcounter{#1}%
\DeclareDocumentCommand{\MyQED}{}{\qed}
\noindent\IfNoValueTF{#1}
{\emph{Proof.\!\!}}
{\emph{Proof\ #1.\ }}
\DeclareDocumentCommand{\ProofLabel}{}{%
%
\addtocounter{ProofLabelcOUntEr}{1}
\label{cUrrEntProoflAbEl\arabic{ProofLabelcOUntEr}}
}
\DeclareDocumentCommand{\ProofRef}{D<>{1}}
{%
\ref{cUrrEntProoflAbEl\arabic{ProofcOUntEr#1}}
}
\DeclareDocumentCommand{\ProofCref}{D<>{1}}
{%
\cref{cUrrEntProoflAbEl\arabic{ProofcOUntEr#1}}
}
\def\TheoremDepth{section}
\DeclareDocumentCommand{\DeclareTheorem}{m o m o}{%
%
%
%
%

\IfNoValueTF{#4}
	{%
	\IfNoValueTF{#2}
		{%
		\newtheorem{#1vArIAblE}{#3}
		}
		{%
		\newtheorem{#1vArIAblE}[#2vArIAblE]{#3}
		}
	}
	{%
	\newtheorem{#1vArIAblE}{#3}[#4]%
	}
\newtheorem*{#1vArIAblE*}{#3}

\DeclareDocumentEnvironment{#1}{o o}

	{
	\IfValueT{##2}%
		{
		\begin{spacing}{##2}
		}
	\IfValueTF{##1}
		{
		\begin{#1vArIAblE}[##1]
		}
		{
		\begin{#1vArIAblE}
		}
%
	\ProofLabel
	}
	{
	\IfValueT{##2}%
		{
		\end{spacing}{##2}
		}
	\end{#1vArIAblE}
	}

\DeclareDocumentEnvironment{#1*}{o o}

	{
	\IfValueT{##2}%
		{
		\begin{spacing}{##2}
		}
	\IfValueTF{##1}
		{
		\begin{#1vArIAblE*}[##1]
		}
		{
		\begin{#1vArIAblE*}
		}
	}
	{
	\IfValueT{##2}%
		{
		\end{spacing}{##2}
		}
	\end{#1vArIAblE*}
	}
}
\theoremstyle{plain}
\theoremstyle{definition}
\theoremstyle{remark}
\begin{document}

\title{Absoluteness of Fixed Points}

\begin{abstract}
We characterize those complete commutative positive linear ordered monoids $\cW$ such that whenever $f$ is a map from a Cauchy complete $\cW$-metric space to itself, the existence of a fixed point of $f$ is independent of the background model of set theory. 
\end{abstract}

\author{Nathanael Ackerman}
\address{Harvard University,
Cambridge, MA 02138, USA}
\email{nate@aleph0.net}

\author{Mostafa Mirabi}
\address{The Taft School, Watertown, CT 06795, USA}
\email{mmirabi@wesleyan.edu}

\subjclass[2020]{46A19,\ 54H25,\ 06F05,\ 03E75}

\keywords{Absoluteness, Fixed points, Generalized Metric Spaces}

\maketitle

\section{Introduction}

Suppose $\cW$ is a complete commutative positive linear ordered monoid,  $\cM$ is a Cauchy complete $\cW$-space and $f\:\cM \to \cM$ is a non-expanding map. When trying to understand the structure of the $\cW$-dynamical system $(\cM, f)$ an important first step is to understand the structure of the fixed points of $f$. However, even the existence of a fixed point of the dynamical system can depend on the background model of set theory,  i.e. there are transitive models of set theory $V_0 \subseteq V_1$ and $\cW$-dynamical systems $(\cM, f) \in V_0$ where $(\cM, f)$ has no fixed point in $V_0$ but does in $V_1$. 

In this paper we then answer the question ``For what $\cW$ is it the case that the existence of a fixed point of a $\cW$-dynamical system is absolute between models of set theory?''.

In order to make sense of this question in \cref{Section: Relativization} we recall the notion of relativization of a higher order structure. The notion of relativization (introduced in \cite{RelGT}) gives a meaning to when a structure which satisfies a higher order property in a larger model of set theory is \emph{the same structure} as one that satisfies the higher order property in a smaller model of set theory. 

The quintessential example of relativization is that of the complete metric space $(\Reals, |\cdot - \cdot|)$. Suppose we have two models of set theory $V_0\subseteq V_1$. In general one often thinks of $(\Reals, |\cdot - \cdot|)^{V_0}$ and $(\Reals, |\cdot - \cdot|)^{V_1}$ as \emph{the same structure}, i.e. the real numbers, just in different models of set theory. The notion of relativization makes precise this intuition. 

With the notion of relativization in hand, in \cref{Section: Distance Monoids}
we discuss \emph{distance monoids}, i.e. the collection of distances in a generalize metric space. In \cref{Section: W-metric spaces} we recall the basic machinery needed to study Cauchy complete $\cW$-metric spaces where $\cW$ is a distance monoid. While the material in this section is not new, we include it for completeness and because most presentations we have found either are too restrictive on the distance set, or are significantly more general than we need (and hence more complicated), often dealing with categories enriched in a quantaloid. 

In \cref{Section:Absoluteness} we prove our upwards absoluteness results. I.e. that fixed point are always upwards absolute and that if the coinitiality of $\cW$ is $\w$ or $\cW$ is not \emph{continuous at $0$} then the existence of a fixed point in a $\cW$-dynamical system is absolute between models of set theory.

In contrast, in \cref{Section:Non-Absoluteness} we show that if the coinitiality of $\cW$ is uncountable then there is a Cauchy complete $\cW$-dynamical system which has no fixed points, but which does have fixed points in some larger model of set theory. 

\subsection{Preliminaries}

In this paper we will work in a background model of Zermelo-Frankel set theory with the axiom of choice (\ZFC). We will use $V$ and its variants to denote transitive subclasses of this background model of set theory such that $(V, \in) \models \ZFC$. We will refer to such a $V$ as a \defn{model of set theory}. If $\gamma$ is an ordinal we let $\cofinal{\gamma}$ denote the cofinality of $\gamma$. If $f\:A \to B$ is a function, we let $\range(f) = f``[A] \subseteq B$. Languages will be finitary first order containing a collection of sorts, constant symbols, function symbols and relation symbols. We will use letters in the calligraphy font to denote structures (in some language), i.e. $\cM$, $\cN$, etc. and the corresponding Roman letters to represent their underlying set, i.e. $M$, $N$, etc.

\section{Relativization}
\label{Section: Relativization}

In this section we give the definition of a relativization of a $\Lang$-structure $\cM$, with respect to a property $P$, to a larger model of set theory. Intuitively, if $V_0 \subseteq V_1$ are models of set theory and $\cM \in V_0$ is an $\Lang$-structure satisfying $P$ in $V_0$ then a structure $\cN \in V_1$ is the relativization of $\cM$ to $V_1$ with respect to $P$ if $\cN$ is the \emph{smallest} $\Lang$-structure in $V_1$ which both contains $\cM$ and satisfies $P$ (in $V_1$). 


For more details on the notion of relativization see \cite{RelGT} or \cite{EncodeMetric}. 

\begin{definition}
\label{Definition:Relativization}
Suppose the following hold.
\begin{itemize}
\item $V_0 \subseteq V_1$ are models of set theory.

\item $\Lang, \cM, A \in V_0$ where $\Lang$ is a language, $\cM$ is a $\Lang$-structure and $A$ is a set.

\item $P(x, y)$ is some formula of set theory such that $V_0 \models P(\cM, A)$. 
\end{itemize}
We say a $\Lang$-structure $\cN \in V_1$ is the \defn{relativization} of $\cM$ to $V_1$ with respect to $P(x, A)$ if the following hold. 
\begin{itemize}
\item[(a)] $V_1 \models P(\cN, A)$.

\item[(b)] $M \subseteq N$ and the identity map is a homomorphism, i.e. $\cM$ is a (not necessarily induced) substructure of $\cN$. 

\item[(c)] If $i\:\cN \to \cN$ is an embedding which is the identity on $M$ then $i$ is an automorphism, 

\item[(d)] Whenever $\cN^* \in V_1$ is a $\Lang$-structure such that 
\begin{itemize}
\item $V_1 \models P(\cN^*, A)$, 

\item $M \subseteq N^*$ and the identity is a homomorphism, 
\end{itemize}
then the there is a homomorphism $i\:\cN \to \cN^*$ which is the identity on $\cM$. 
\end{itemize}
\end{definition}
\cref{Definition:Relativization} (a) and (b) ensure that $\cN$ satisfies $P(x, A)$ and contains $M$. \cref{Definition:Relativization} (d) ensures that $\cN$ is the minimal such model, and \cref{Definition:Relativization} ensures for any two relativizations $\cN_0$, $\cN_1$ there is an isomorphism from $\cN_0$ to $\cN_1$ which is the identity on $M$.

\section{Distance Monoids}
\label{Section: Distance Monoids}

In this section we review the basic notions surrounding the space of distances for a generalized metric space. 

\begin{definition}
\label{Definition: Linear Ordered Monoid}
A \defn{commutative positively linear ordered monoid} (or \defn{distance monoid}) is a triple $\<W, +, 0, \leq\>$ such that 
\begin{itemize}
\item $\<W, +, 0\>$ is a monoid, 

\item $\<W, \leq\>$ is a linear lattice minimal element $0$, 

\item $(\forall a, b, c \in W)\, a \leq b \rightarrow c + a \leq c + b \And a + c \leq b + c$. 

\item For any $A \subseteq W$ and $b \in W$, $b + \bigwedge A = \bigwedge_{a \in A} a + b$. 
\end{itemize}
Let $\nzero{W} = W \setminus \{0\}$.  For $n \in \w$ and $a \in W$ we use the following shorthand. 
\[
n \cdot a = \underbrace{a + \dots + a}_{n\textrm{ times}}
\]

We say $\cW$ is \defn{continuous at $0$} if $0 = \bigwedge_{a, b \in  \nzero{W}} a + b$. 
We say $\cW$ is \defn{complete} if $\<W,\leq\>$ is a complete linear order. 
We say $\cW$ is \defn{continuous} if it is complete and continuous at $0$. 
\end{definition}

We consider distance monoids as structure in the language $\DistMonLang= \{S_{\mathrm{dm}}, +, 0, \leq\}$ where $S_{\mathrm{dm}}$ is the unique sort, $+$ is a function symbol of arity $S_{\mathrm{dm}} \times S_{\mathrm{dm}} \to S_{\mathrm{dm}}$, where $0$ is a constant symbol of arity $S_{\mathrm{dm}}$, and where $\leq$ is a relation symbol of arity $S_{\mathrm{dm}} \times S_{\mathrm{dm}}$. 

One of our main objects of study in this paper will be generalized metric spaces whose distances take values in a distance monoid. It will also be important to understand what the Cauchy completion of such a generalized metric space is. However, if $\cM$ is such a generalized metric space with distances in $\cW$ and $\cW$ is not continuous at $0$ then there will not be any Cauchy sequence over $\cM$.

\begin{definition}
We define an \defn{initial sequence} in $\cW$ to be an order preserving map $s\:(\alpha^{op}, \leq) \to (\nzero{W}, \leq)$ such that $(\forall a \in \nzero{W})(\exists \beta \in \alpha)s(\beta) \leq a$.

We define the \defn{coinitiallity} of $\cW$, denoted $\coinit{\cW}$, to be the smallest ordinal which is the domain of an initial sequence.

We call an initial sequence $\alpha$ \defn{nice} if $(\forall i \in \dom(\alpha))\, 4 \cdot \alpha(i+1) \leq \alpha(i)$.  
\end{definition}

\begin{lemma}
The coinitiality of a continuous distance monoid is an infinite regular cardinal. 
\end{lemma}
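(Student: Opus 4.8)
The plan is to establish separately that $\kappa := \coinit{\cW}$ is infinite and that it is a regular cardinal, isolating where completeness and continuity at $0$ are actually used. Throughout I take $\cW$ nontrivial, i.e. $\nzero{W} \neq \emptyset$ (the degenerate case $W = \{0\}$ is excluded, since there the only initial sequence has length $0$).

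First I would check that $\kappa$ is well defined and infinite. For well-definedness it suffices to produce one initial sequence: build a strictly $\le$-decreasing sequence $\<w_\beta\>$ in $\nzero{W}$ by recursion, at each stage choosing, whenever $\{w_\beta : \beta < \alpha\}$ fails to be coinitial, some $a \in \nzero{W}$ lying strictly below every $w_\beta$ and setting $w_\alpha := a$. Since a strictly decreasing sequence is injective into the set $W$, the recursion must terminate, and at termination the sequence (reindexed as an order-preserving map out of $\kappa^{op}$) is coinitial. For infiniteness I would argue contrapositively: a finite initial sequence is a finite $\le$-decreasing tuple whose least entry $m$ satisfies $m \le a$ for all $a \in \nzero{W}$, so $m = \min \nzero{W}$; but then positivity gives $a + b \ge m + m \ge m > 0$ for all $a, b \in \nzero{W}$, whence $\bigwedge_{a,b \in \nzero{W}} a + b \ge m > 0$, contradicting continuity at $0$. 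Hence $\kappa \ge \w$. (Completeness and continuity at $0$ enter only here and in guaranteeing the relevant infima exist.)

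The core of the argument is regularity, and here I expect a pure minimality argument that uses neither completeness nor continuity beyond the existence of $\kappa$. Fix an initial sequence $s \colon (\kappa^{op}, \le) \to (\nzero{W}, \le)$ witnessing $\kappa = \coinit{\cW}$, and suppose toward a contradiction that $\lambda := \cofinal{\kappa} < \kappa$. Choose an increasing cofinal map $g \colon \lambda \to \kappa$ and put $t = s \circ g$. I would then verify two things: (a) $t$ is order preserving as a map $(\lambda^{op}, \le) \to (\nzero{W}, \le)$, which holds because $g$ is monotone and $s$ is $\le$-decreasing in the usual order; and (b) $t$ is coinitial, since given $a \in \nzero{W}$ one picks $\beta < \kappa$ with $s(\beta) \le a$, then $\xi < \lambda$ with $g(\xi) \ge \beta$ by cofinality of $g$, whence $t(\xi) = s(g(\xi)) \le s(\beta) \le a$. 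Thus $t$ is an initial sequence with domain $\lambda < \kappa$, contradicting the minimality defining $\kappa$. Therefore $\cofinal{\kappa} = \kappa$, and combined with $\kappa \ge \w$ this makes $\kappa$ an infinite regular cardinal.

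The main obstacle is not any single deep step but the bookkeeping around the reversed order $\kappa^{op}$: one must keep straight that ``order preserving from $\alpha^{op}$'' means $\le$-decreasing in the usual order of $\nzero{W}$, so that the composite $s \circ g$ genuinely qualifies as an initial sequence in the sense of the definition. Once that is set up correctly, infiniteness is precisely the assertion that continuity at $0$ forbids a least positive element, and regularity is the standard cofinality-of-a-coinitial-sequence argument specialized to the minimal witness $s$.
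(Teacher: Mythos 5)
Your proof is correct and follows essentially the same route as the paper's: the paper likewise dismisses regularity as the standard cofinal-composition argument ("immediate") and derives infiniteness by observing that a finite coinitial sequence would force $\nzero{W}$ to have a positive minimum $m$, whence $\bigwedge_{a,b \in \nzero{W}} a+b \geq m > 0$ contradicts continuity at $0$. Your write-up merely fills in the details the paper leaves implicit (existence of an initial sequence, the explicit $s \circ g$ construction, and the degenerate case $W=\{0\}$).
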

\begin{proof}
Let $\cW$ be a distance monoid. It is immediate that $\coinit{\cW}$ is a regular cardinal. Further if $\coinit{\cW}$ is finite it must be $1$. But if $\coinit{\cW} = 1$ then $\bigwedge \nzero{\cW}$ exists, as $\cW$ is complete, and $\bigwedge \nzero{\cW} > 0$ hence $\cW$ is not continuous at $0$. 
\end{proof}

The following shows that we can always find nice initial sequences for continuous distance monoid. 
\begin{proposition}
\label{Nice initial sequences exist}
Suppose $\cW$ is a continuous distance monoid. 
\begin{itemize}
\item[(a)] If $\coinit{\cW} = \w$ then for all $n \in \w$ there is an initial sequence $\alpha$ such that 
\[
(\forall k \in \w)\, n \cdot \alpha(k+1) \leq \alpha(k).
\]

\item[(b)] If $\coinit{\cW} > \w$ then there is an initial sequence $\alpha$ such that 
\[
(\forall n \in \w)(\forall k \in \coinit{\cW})\, n \cdot \alpha(k+1) \leq \alpha(k).
\]

\end{itemize}

\end{proposition}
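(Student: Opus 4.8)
The plan is to isolate a single consequence of continuity at $0$ and feed it into a transfinite recursion. The basic building block is a halving lemma: if $a \in \nzero{W}$ then, since $\bigwedge_{x,y \in \nzero{W}}(x+y) = 0 < a$, the element $a$ fails to be a lower bound of $\{x+y : x,y \in \nzero{W}\}$, so there are $x,y \in \nzero{W}$ with $x + y < a$; setting $b = x \wedge y \in \nzero{W}$ gives $b + b \leq x + y < a$. Iterating $m$ times yields, for every $a \in \nzero{W}$, some $b \in \nzero{W}$ with $2^m \cdot b \leq a$, and since $n \leq 2^m$ for large $m$, for every $n \in \w$ there is $b \in \nzero{W}$ with $n \cdot b \leq a$. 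This is the only place continuity at $0$ is used.

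For (a) I would fix $n$, fix any initial sequence $s$ of length $\w$ (one exists because $\coinit{\cW} = \w$), and define $\alpha$ by recursion: $\alpha(0) = s(0)$, and given $\alpha(k)$ choose $b \in \nzero{W}$ with $n \cdot b \leq \alpha(k)$ and set $\alpha(k+1) = b \wedge s(k+1)$. Then $n \cdot \alpha(k+1) \leq n \cdot b \leq \alpha(k)$, while $\alpha(k+1) \leq \alpha(k)$ is automatic (as $\alpha(k+1) \leq n \cdot \alpha(k+1) \leq \alpha(k)$), and $\alpha(k+1) \leq s(k+1)$ forces $\alpha$ to be coinitial since $s$ is. Thus $\alpha$ is an initial sequence with the required gap.

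The heart of (b) is the claim that when $\coinit{\cW} > \w$ the set $N_a := \{b \in \nzero{W} : (\forall n)\, n\cdot b \leq a\}$ is nonempty for every $a \in \nzero{W}$. I would prove the contrapositive: if $N_a = \emptyset$ for some $a$, then by definition every $b \in \nzero{W}$ fails $(\forall n)\,n\cdot b \leq a$, so by linearity there is $n$ with $n \cdot b > a$. Starting from $b_0 = a$ and halving repeatedly gives $b_k \in \nzero{W}$ with $2^k \cdot b_k \leq a$; for any $c \in \nzero{W}$, choosing $n$ with $n \cdot c > a$ and $k$ with $2^k \geq n$ yields $a \geq 2^k \cdot b_k$ and $2^k \cdot c \geq n \cdot c > a$, so $b_k \leq c$ (otherwise $2^k \cdot b_k \geq 2^k \cdot c > a$). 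Hence $(b_k)_{k \in \w}$ is coinitial and $\coinit{\cW} = \w$, a contradiction. Since $N_a$ is closed under meeting with arbitrary positive elements (if $b \in N_a$ then $b \wedge c \in N_a$ as $n\cdot(b\wedge c) \leq n\cdot b \leq a$), it contains elements below any prescribed $c \in \nzero{W}$.

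With this in hand I would run a transfinite recursion of length $\kappa = \coinit{\cW}$ along an initial sequence $s$ of length $\kappa$. At successors I choose $\alpha(k+1) \in N_{\alpha(k)}$ with $\alpha(k+1) \leq s(k+1)$, giving $n \cdot \alpha(k+1) \leq \alpha(k)$ for all $n$ simultaneously. At a limit $\lambda < \kappa$ I set $\alpha(\lambda) = \bigwedge_{k < \lambda}\alpha(k)$, which exists by completeness; the crucial point is that $\alpha(\lambda) \in \nzero{W}$, for otherwise the decreasing sequence $(\alpha(k))_{k < \lambda}$ would be coinitial and exhibit an initial sequence of length $\lambda < \kappa$, contradicting minimality of $\coinit{\cW}$. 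Coinitiality of the whole $\alpha$ again follows from $\alpha(k+1) \leq s(k+1)$ together with coinitiality of $s$. I expect the two limit-related issues, namely nonemptiness of $N_a$ and positivity of the infima, to be the only genuine obstacles; everything else is bookkeeping about monotonicity and meets.
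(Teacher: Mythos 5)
Your proof is correct and follows essentially the same route as the paper: the halving consequence of continuity at $0$ is the paper's map $h$ with $2\cdot h(a)\leq a$, your set $N_a$ plays the role of the paper's $h^{\w}(a)=\bigwedge_n h^n(a)$ (whose positivity when $\coinit{\cW}>\w$ is exactly your nonemptiness claim), and the transfinite recursion along a fixed initial sequence with infima at limits is the same construction. If anything you are more careful than the paper at the limit stages, where the paper sets $\alpha(\w\cdot\delta)=\bigwedge_{i\in\w\cdot\delta}\alpha(i)$ without remarking that this infimum is positive for the same coinitiality reason you give.
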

\begin{proof}
As $\cW$ is continuous at $0$, for each $a \in \nzero{W}$ we can find $h(a) \in \nzero{W}$ such that $2\cdot h(a) \leq a$. In particular we have $2^n \cdot h^n(a) \leq a$. Let $h^\w(a) = \bigwedge h^n(a)$. Suppose $\beta\:\coinit{\cW}^{op} \to \nzero{W}$ is an initial sequence. 

If $\coinit{\cW} = \w$ then define an initial sequence $\alpha$ as follows. First $\alpha(0) = \beta(0)$. If $\alpha(k)$ is defined let $\alpha(k+1)$ be any $\beta(j)$ such that $\beta(j) \leq h^n(\alpha(k))$ and $j \geq k$. It is then immediate that for all $k \in \w$, we have $n \cdot \alpha(k+1) \leq 2^n \cdot \alpha(k+1) \leq \alpha(k)$ and so (a) holds. 

Now suppose $\coinit{\cW} > \w$. We define an initial sequence $\alpha$ as follows. Let $\alpha(0) = \beta(0)$. Let $\alpha(\w \cdot \delta) = \bigwedge_{i \in \w \cdot \delta} \alpha(i)$. Suppose $\alpha(\gamma)$ has been defined. As $\coinit{\cW} > \w$ we have $h^\w(\alpha(\gamma)) > 0$. Let $\alpha(\gamma+1) = \beta(j)$ for any $\beta(j) \leq h^\w(\alpha(\gamma))$ and $j \geq \gamma$. It is then immediate that $\alpha$ satisfies the conditions of (b). 
\end{proof}

\begin{definition}
If $\cW = \<W, +, 0, \leq\>$ is a distance monoid and $\cW^* = \<W^*, +^*, 0, \leq^*\>$ is a distance monoid such that $\<W^*,\leq^*\>$ is the Dedekind–MacNeille completion of $\<W, \leq\>$ and $+^* \cap (W \times W) = +$ then we call $\cW^*$ a \defn{completion} of $\cW$.  
\end{definition}

\begin{proposition}
\label{Completions of distance monoid exists}
If $\cW$ is a distance monoid then up to isomoprhism it has a unique completion $\mcomp{\cW}$. If $\cX$ is a complete distance monoid and $i\:\cW \to \cX$ is an $\DistMonLang$-embedding then there is a unique embedding $j\:\mcomp{\cW} \to \cX$ such that $j \rest[W] = i$. 
\end{proposition}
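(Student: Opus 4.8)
The plan is to build the addition on the order completion by an explicit infimum formula and then to verify the monoid axioms by showing that the meet-distributivity clause of \cref{Definition: Linear Ordered Monoid} survives the completion. First I would recall the Dedekind--MacNeille completion $\langle \mcomp{W}, \leq^*\rangle$ of the linear order $\langle W, \leq\rangle$: it is a complete linear order in which $W$ is both join-dense and meet-dense, the inclusion preserves every meet and join that already exists in $W$, and since $0$ is least in $W$ it stays least in $\mcomp{W}$. For $\xi \in \mcomp{W}$ write $U_\xi = \{a \in W : \xi \leq a\}$, so meet-density gives $\xi = \bigwedge U_\xi$. I would then define
\[
\xi +^* \eta \;=\; \bigwedge\{\, a + b \;:\; a,b \in W,\ \xi \leq a,\ \eta \leq b \,\},
\]
the meet taken in $\mcomp{W}$. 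Commutativity is built into the formula, and monotonicity is immediate because enlarging an argument shrinks the index set of the meet. That $+^*$ extends $+$, and that $0$ is a two-sided identity, both follow from monotonicity of $+$ on $W$, since the relevant meets are attained at the least admissible summands.

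The technical heart is to lift the fourth clause of \cref{Definition: Linear Ordered Monoid} to $\mcomp{W}$. I would first prove the one-variable statement: for $b \in W$ and $A \subseteq W$, one has $b +^* \bigwedge A = \bigwedge_{a \in A}(b + a)$. The inequality $\leq$ is monotonicity; for $\geq$ one fixes $e \in W$ with $e \geq \bigwedge A$ and must bound $b + e$ below by $\bigwedge_{a}(b+a)$. Linearity of the order splits this into two cases: either $e \geq a$ for some $a \in A$, which is immediate, or $e$ is a lower bound of $A$, which forces $e = \bigwedge A \in W$, so that the meet is attained inside $W$ and the original clause of \cref{Definition: Linear Ordered Monoid} applies verbatim. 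Meet-density then upgrades this to arbitrary $S \subseteq \mcomp{W}$ by writing $\bigwedge S = \bigwedge(\bigcup_{\sigma \in S} U_\sigma)$, and a second application (now letting the left summand range over $U_\xi$) yields the full law $\xi +^* \bigwedge S = \bigwedge_{\sigma}(\xi +^* \sigma)$ in $\mcomp{W}$. Associativity then follows by evaluating both $(\xi +^* \eta) +^* \zeta$ and $\xi +^* (\eta +^* \zeta)$ against the symmetric triple meet $\bigwedge\{a+b+c : a \geq \xi,\ b \geq \eta,\ c \geq \zeta\}$. This makes $\mcomp{\cW} = \langle \mcomp{W}, +^*, 0, \leq^*\rangle$ a complete distance monoid extending $\cW$, i.e.\ a completion; the attained-meet case above is the one place I expect to need genuine care, as it is precisely where incompleteness of $\cW$ could otherwise obstruct continuity of $+$.

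For uniqueness I would observe that the displayed formula is forced: in any distance-monoid structure on the Dedekind--MacNeille completion extending $+\rest[W]$, the lifted fourth clause together with meet-density gives $\xi \odot \eta = \bigwedge\{a + b : \xi \leq a,\ \eta \leq b\}$. Hence once the orders of two completions are identified by the unique identity-on-$W$ isomorphism of Dedekind--MacNeille completions, they carry literally the same operation, so the order isomorphism is already a $\DistMonLang$-isomorphism fixing $W$.

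For the universal property, given a complete distance monoid $\cX$ and an embedding $i \colon \cW \to \cX$, I would set $j(\xi) = \bigwedge_{a \in U_\xi} i(a)$, computed in $\cX$. This extends $i$, and a cut argument using join- and meet-density of $W$ shows $j$ is order-reflecting, hence an embedding of linear orders, with uniqueness of $j$ forced by meet-density. The remaining point, that $j$ preserves $+$, is where I expect the main difficulty. Using that $\cX$ itself satisfies the fourth clause of \cref{Definition: Linear Ordered Monoid}, one computes $j(\xi) + j(\eta) = \bigwedge_{a \in U_\xi,\, b \in U_\eta} i(a+b)$ in $\cX$, and must match this against $j(\xi +^* \eta)$. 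The two agree immediately except when $\xi +^* \eta$ is attained in $W$, and that case must again be routed through the meet-distributivity of $\cX$ together with the cofinality of the sum-set $\{a+b : a\in U_\xi,\ b\in U_\eta\}$ below $U_{\xi +^* \eta}$. Establishing this compatibility is the crux of the universal property, and it is the step I would expect to write out most carefully.
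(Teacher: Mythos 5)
Your proposal follows essentially the same route as the paper: define $\xi +^* \eta$ as the meet of $\{a+b : a,b\in W,\ a\geq\xi,\ b\geq\eta\}$, derive uniqueness by showing the meet-distributivity axiom forces this formula on any completion, and extend an embedding into a complete $\cX$ by taking meets of images. The paper leaves the axiom verification and the universal property as "straightforward," so your write-up is simply a more detailed version of the same argument.
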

\begin{proof}
For $a \in \mcomp{\cW}$ we have $a = \bigwedge_{b \geq a, b \in W}$. Let $a +^* b = \bigwedge_{a' \geq a, b' \geq b, a, b \in W} a' + b'$. It is then straightforward to check that $\<W^*, +^*, 0, \leq^*\>$ is a distance monoid. Further, if $\<W^*, +^\circ, 0, \leq^*\>$ is any completion of $\cW$ and $a, b \in W^*$ then 
\begin{align*}
a +^\circ b & = a +^\circ \bigwedge_{b' \geq b, b \in W} = \bigwedge_{b' \geq b, b \in W} a +^\circ b' = \bigwedge_{b' \geq b, b \in W} (b' + \bigwedge_{a' \geq a, a \in W} a' +^\circ b') \\
& = \bigwedge_{b' \geq b, b \in W}\bigwedge_{a' \geq a, a \in W} a' +^\circ b' = a +^* b.
\end{align*}
Hence $+^\circ = +^*$ and the completion is unique. 

Using the universal properties of the Dedekind–MacNeille completion it is then straightforward to show that whenever $i\:\cW \to \cX$ is an embedding there is a unique extension to an embedding $j\:\mcomp{\cW} \to \cX$. 
\end{proof}

The following is immediate. 
\begin{lemma}
Suppose $\cW$ is a distance monoid. 
\begin{itemize}
\item $\coinit{\cW} = \coinit{\mcomp{\cW}}$. 

\item If $\cW$ is continuous at $0$ then $\mcomp{\cW}$ is continuous at $0$.

\end{itemize}
\end{lemma}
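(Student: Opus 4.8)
The plan is to reduce both bullet points to a single density observation: that $\nzero{W}$ is coinitial in $\nzero{\mcomp{W}}$, meaning that for every $a \in \nzero{\mcomp{W}}$ there is some $w \in \nzero{W}$ with $w \leq a$. To establish this I would use the standard fact that in the Dedekind–MacNeille completion $\mcomp{\cW}$ the copy of $W$ is join-dense, so that every $a \in \mcomp{W}$ satisfies $a = \bigvee\{w \in W : w \leq a\}$. If $a > 0$, then since $0$ is the least element and the order is linear, the set $\{w \in W : w \leq a\}$ cannot consist of $0$ alone (otherwise its join would be $0 < a$); hence it contains some $w \in \nzero{W}$ with $w \leq a$. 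I expect this to be the only real content of the lemma; everything else is bookkeeping. Equivalently, one checks that no element of $\mcomp{W}$ lies strictly between $0$ and $\bigwedge \nzero{W}$.

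For the first bullet I would argue that $\coinit{\cW}$ is exactly the coinitiality of the linear order $(\nzero{W}, \leq)$, and that a coinitial suborder of a linear order shares its coinitiality. Concretely, the inequality $\coinit{\mcomp{\cW}} \leq \coinit{\cW}$ is immediate: any initial sequence $s\:\alpha^{op} \to \nzero{W}$ remains order preserving into $\nzero{\mcomp{W}}$ and stays coinitial there by composing with the density observation. For the reverse inequality, given an initial sequence $t\:\kappa^{op} \to \nzero{\mcomp{W}}$ with $\kappa = \coinit{\mcomp{\cW}}$, I would choose for each $\beta$ some $w_\beta \in \nzero{W}$ with $w_\beta \leq t(\beta)$; the family $\{w_\beta : \beta < \kappa\}$ is then coinitial in $\nzero{W}$ of cardinality at most $\kappa$, and thinning it to a decreasing coinitial sequence (the standard fact that coinitiality is bounded by the cardinality of any coinitial set) gives an initial sequence in $\cW$ of length at most $\kappa$, so $\coinit{\cW} \leq \coinit{\mcomp{\cW}}$.

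For the second bullet, suppose $\cW$ is continuous at $0$ and let $\epsilon \in \nzero{\mcomp{W}}$. By the density observation pick $w \in \nzero{W}$ with $w \leq \epsilon$. Continuity at $0$ of $\cW$ means $\bigwedge_{a, b \in \nzero{W}} a + b = 0 < w$, so $w$ is not a lower bound of this set and there exist $a, b \in \nzero{W}$ with $a + b < w$. Since $+^*$ extends $+$ we have $a +^* b = a + b < w \leq \epsilon$ with $a, b \in \nzero{\mcomp{W}}$. Thus no positive element of $\mcomp{W}$ is a lower bound of $\{a +^* b : a, b \in \nzero{\mcomp{W}}\}$, and as $0$ itself is a lower bound, this infimum equals $0$; hence $\mcomp{\cW}$ is continuous at $0$. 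The only point requiring care throughout is that all the relevant infima are taken in the complete order $\mcomp{\cW}$, so the greatest lower bound genuinely exists and the linear-order arguments (not-$\geq$ implies $<$) are valid.
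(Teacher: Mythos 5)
Your proof is correct. The paper gives no argument at all for this lemma (it is labelled ``immediate''), and your write-up supplies exactly the intended content: the key point that $\nzero{W}$ is coinitial in $\nzero{\mcomp{W}}$ (via join-density of $W$ in the Dedekind--MacNeille completion), from which both bullets follow by routine transfer of initial sequences and of the infimum $\bigwedge_{a,b\in\nzero{W}} a+b$.
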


Note the following is an important and immediate consequence of \cref{Completions of distance monoid exists}.

\begin{proposition}
\label{Relativization of continuous distance monoids}
Suppose
\begin{itemize}
\item $V_0 \subseteq V_1$ are models of set theroy. 

\item $\cW \in V_0$ and $V_0 \models \cW\text{ is a complete distance monoid}$. 

\item $\cW^* = \mcomp{\cW}^{V_1}$. 
\end{itemize}
Then $\cW^*$ is the relativization of $\cW$ to $V_1$ with respect to the property of being a complete distance monoid. 
\end{proposition}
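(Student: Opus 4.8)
The plan is to verify directly the four conditions (a)--(d) of \cref{Definition:Relativization} for the property $P(x) =$ ``$x$ is a complete distance monoid'' (there is no parameter, so $A$ may be suppressed), taking $\cN = \cW^* = \mcomp{\cW}^{V_1}$. Conditions (b), (c) and (d) will follow quickly from the construction and the universal property recorded in \cref{Completions of distance monoid exists} once (a) is in hand, so the real content is (a): the assertion that $\cW^*$ is again a complete distance monoid \emph{as computed in $V_1$}.

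For (a), I would invoke \cref{Completions of distance monoid exists} inside $V_1$. Since it is a theorem of $\ZFC$ that the completion of a distance monoid is a complete distance monoid, it suffices to check that $V_1 \models \cW\text{ is a distance monoid}$. The monoid axioms, the linear order axioms, and translation invariance are first-order statements about $\langle W, +, 0, \leq\rangle$ and are therefore absolute between $V_0$ and $V_1$. The only axiom requiring care is the infimum-distributivity law $b + \bigwedge A = \bigwedge_{a\in A}(a+b)$, since $V_1$ contains subsets $A \subseteq W$ not present in $V_0$ (and, as with $\Reals$, $\cW$ typically ceases to be \emph{complete} in $V_1$). The key trick is that I need the law only for those $A$ whose infimum $m = \bigwedge A$ is realized in $W$ within $V_1$: fixing a lower bound $c$ of $\{a+b : a \in A\}$, I would consider the set $D = \{x \in W : x + b \geq c\}$. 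Because $b, c \in W \subseteq V_0$ and $+, \leq$ are components of $\cW \in V_0$, the set $D$ is definable with parameters in $V_0$, hence $D \in V_0$; as $\cW$ is complete in $V_0$, $d_0 := \bigwedge D$ exists in $W$, and applying the distributivity law in $V_0$ to $D \in V_0$ gives $d_0 + b = \bigwedge_{x\in D}(x+b) \geq c$. Since $A \subseteq D$, the element $d_0$ is a lower bound of $A$, so $d_0 \leq m$ and thus $c \leq d_0 + b \leq m + b$; as $c$ was arbitrary this yields $m + b = \bigwedge_{a\in A}(a+b)$. This is the step I expect to be the main obstacle, since it is precisely where the failure of completeness in $V_1$ must be navigated.

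Condition (b) is immediate from the definition of completion: $W \subseteq W^*$ with $\leq^*\rest[W] = \leq$ and $+^* \cap (W \times W) = +$, so the inclusion $\cW \to \cW^*$ is a homomorphism. For (c), suppose $i\:\cW^* \to \cW^*$ is an embedding fixing $W$ pointwise. Using that $W$ is meet-dense in its Dedekind--MacNeille completion (as recorded in the proof of \cref{Completions of distance monoid exists}, $a = \bigwedge_{b \geq a,\, b \in W} b$) together with order-preservation of $i$, one gets $i(a) \leq a$ for every $a \in W^*$; the dual computation, using the equally standard join-density $a = \bigvee_{b \leq a,\, b \in W} b$, gives $i(a) \geq a$. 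Hence $i = \id$, which is in particular an automorphism. Finally, for (d), let $\cX \in V_1$ be any complete distance monoid with $W \subseteq X$ such that the inclusion $\cW \to \cX$ is a homomorphism. Since $\leq$ is total on $W$, any order-preserving injection extending it reflects order as well, and the inclusion automatically preserves $+$ and $0$; thus it is in fact an $\DistMonLang$-embedding. The universal property in \cref{Completions of distance monoid exists}, applied in $V_1$, then produces a unique embedding $\cW^* \to \cX$ that is the identity on $W$, which is the required homomorphism. Having verified (a)--(d), $\cW^*$ is the relativization of $\cW$ to $V_1$ with respect to being a complete distance monoid.
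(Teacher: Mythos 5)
Your proof is correct and follows the same route as the paper, which simply declares the proposition an immediate consequence of \cref{Completions of distance monoid exists}: you verify conditions (a)--(d) of \cref{Definition:Relativization} using exactly the construction and universal property recorded in that proposition. The one point of substance you add beyond what the paper treats as immediate is the check that the infimum-distributivity axiom survives passage to $V_1$, where $W$ acquires new subsets and loses completeness, and your argument there (reducing to the $V_0$-definable set $D = \{x \in W : x + b \geq c\}$ and applying the law in $V_0$) is sound.
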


\section{$\cW$-Metric Spaces}
\label{Section: W-metric spaces}

In this section we review the basic concepts surrounding generalized metric spaces which take values in a distance monoid. 

\begin{definition}
Suppose $\cW$ is a distance monoid. A \defn{$\cW$-metric space} is a pair $\cM = (M, \dist[\cM])$ where $M$ is a set and $\dist[\cM] \: M \times M \rightarrow W$ satisfies:
\begin{itemize}
\item[] $(\forall x, y\in M)\, [\dist[\cM](x, y) \Or \dist[\cM](y, x) = 0] \leftrightarrow x = y$. 

\item[] (Transitivity) $(\forall x, y, z \in M)\, \dist[\cM](x, z) \leq \dist[\cM](x, y) + \dist[\cM](y, z)$.

\end{itemize}

By a \defn{generalized metric space} we mean a pair $(\cM, \cW)$ where $\cW$ is a distance monoid and $\cM$ is a $\cW$-metric space.

We consider a generalized metric space as a $\GenMetLang$-structure where $\GenMetLang = \DistMonLang \cup \{S_{\mathrm{met}}, \dist\}$ where $S_{\mathrm{met}}$ is a sort and $\dist$ is a function symbol of arity $S_{\mathrm{met}} \times S_{\mathrm{met}} \to S_{\mathrm{dm}}$. 
\end{definition}

\begin{definition}
Suppose $\cM$ is a $\cW$-metric space and $f:M \rightarrow M$. We say $f$ is \defn{non-expanding} if
\[
(\forall x,y\in M)\, \dist[\cM](f(x), f(y)) \leq \dist[\cM](x, y).
\]
\end{definition}
%

\subsection{Cauchy Completion}

In this section we give basic properties of Cauchy complete $\cW$-metric spaces. 

\begin{definition}
Suppose $\cW$ is a distance monoid and $\cM$ is a $\cW$-metric space. We say $p$ is a \defn{Cauchy sequence} on $\cM$ if
\begin{itemize}
\item $p\: A \to M$ where $A \subseteq \nzero{W}$ and $\bigwedge_{a, b \in A} a + b = 0$. 

\item $(\forall a, b \in A)\, \dist[\cM](p(a), p(b)) \leq a + b$

\end{itemize}
We call $A$ the \defn{domain} of $\bp$. 

We say that $p$ \defn{converges} to $x\in M$ if 
\[
\bigvee_{a \in \nzero{W}} \bigwedge_{\substack{b \leq a\\ b \in \dom(p)}} \dist[\cM](x, p(b)) = 0\text{ and } \bigwedge_{\substack{b \leq a\\ b \in \dom(p)}} \dist[\cM](x, p(b)) \dist[\cM](p(b), x) = 0
\]

We say a Cauchy sequence is \defn{maximal} if $A = \nzero{W}$. 

We say a generalized metric space $(\cM, \cW)$ is \defn{Cauchy complete} if $\cW$ is complete and every Cauchy sequence converges to an element in $M$.
\end{definition}

The following lemma is one of the main reasons we focus on continuous distance monoids. 
\begin{lemma}
\label{Non-continuous distance monoids give rise to Cauchy complete spaces}
Suppose $\cW$ is a complete distance monoid which is not continuous at $0$ and $\cM$ is a $\cW$-metric space. Then $(\cM, \cW)$ is Cauchy complete. 
\end{lemma}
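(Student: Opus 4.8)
The plan is to observe that the hypothesis does almost all of the work: once $\cW$ fails to be continuous at $0$, there are simply no Cauchy sequences on $\cM$ at all, so the convergence clause in the definition of Cauchy completeness holds vacuously, and the only thing left to supply is the hypothesis that $\cW$ is complete. So the whole content is to rule out the existence of a Cauchy sequence.

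First I would record the quantity $\epsilon \defas \bigwedge_{a, b \in \nzero{W}} a + b$. By definition, $\cW$ failing to be continuous at $0$ means exactly that $\epsilon \neq 0$, and since $0$ is the least element of $W$ this gives $\epsilon > 0$. I would also note in passing that this hypothesis forces $\nzero{W} \neq \emptyset$: otherwise the infimum defining $\epsilon$ would be taken over the empty set and hence equal the greatest element $\bigvee W$, which in the trivial monoid $W = \{0\}$ is $0$, making $\cW$ continuous at $0$ after all. In particular $W$ is nontrivial, so $\bigvee W > 0$.

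Next I would show that no Cauchy sequence exists. Suppose toward a contradiction that $p \: A \to M$ is a Cauchy sequence, so that $A \subseteq \nzero{W}$ and $\bigwedge_{a, b \in A} a + b = 0$. If $A$ is nonempty, then for all $a, b \in A$ we have $a, b \in \nzero{W}$ and hence $a + b \geq \epsilon$; taking the infimum over such pairs gives $\bigwedge_{a, b \in A} a + b \geq \epsilon > 0$, contradicting the Cauchy condition. If instead $A = \emptyset$, then $\bigwedge_{a, b \in A} a + b$ is an infimum over the empty index set, namely $\bigvee W$, which we just observed is strictly positive, again contradicting the Cauchy condition. Either way, no such $p$ can exist.

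Finally, since $\cW$ is complete by hypothesis and the collection of Cauchy sequences on $\cM$ is empty, the assertion that every Cauchy sequence converges to an element of $M$ is vacuously true, so $(\cM, \cW)$ is Cauchy complete. The only genuinely delicate point—and the step I would state most carefully—is the empty-domain edge case together with the observation that non-continuity at $0$ already rules out the degenerate monoid $W = \{0\}$; everything else is a one-line estimate from the monotonicity of $+$ and the definition of $\epsilon$.
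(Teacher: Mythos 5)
Your proof is correct and follows essentially the same route as the paper's: non-continuity at $0$ means no set $A \subseteq \nzero{W}$ can satisfy $\bigwedge_{a,b \in A} a + b = 0$, so there are no Cauchy sequences and completeness holds vacuously. The only difference is that you additionally spell out the empty-domain and trivial-monoid edge cases, which the paper's two-line proof leaves implicit.
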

\begin{proof}
Because $\cW$ is not continuous at $0$ there are no sets $A \subseteq \nzero{W}$ with $\bigwedge_{a, b \in A} a + b = 0$. Therefore there are no Cauchy sequences on $\cM$.
\end{proof}

\begin{definition} 
Suppose $\cW$ is a continuous distance monoid and $\cM$ is a $\cW$-metric space. Let $\cseq{\cM}$ be the collection of Cauchy sequences over $\cM$. 

For $p, q \in \cseq{\cM}$ define
\[
\dist[\cseq{\cM}](p, q) = \bigvee_{c \in \nzero{W}} \bigwedge_{\substack{a\in \dom(p)\\ b \in \dom(q) \\ a, b \leq c}}\dist[\cM](p(a), q(b)).
\]

We say $p \equiv_{\cM} q$ if $\dist[\cseq{\cM}](p, q) = 0$ and $\dist[\cseq{\cM}](q, p) = 0$. 
\end{definition}

\begin{lemma}
\label{Transitivity of distance on Cauchy sequences}
Suppose $\cW$ is a continuous distance monoid, $\cM$ is a $\cW$-metric space and $p, q, r \in \cseq{\cM}$. Then 
\[
\dist[\cseq{\cM}](p, r)  \leq \dist[\cseq{\cM}](p, q) + \dist[\cseq{\cM}](q, r).
\]
\end{lemma}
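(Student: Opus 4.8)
The plan is to reduce the inequality to a single estimate at each ``scale'' $c \in \nzero{W}$ and then pass to the limit as $c$ shrinks to $0$. Writing $F_{pr}(c) = \bigwedge_{a \in \dom(p),\, b \in \dom(r),\, a, b \leq c} \dist[\cM](p(a), r(b))$ for the inner infimum, so that $\dist[\cseq{\cM}](p, r) = \bigvee_c F_{pr}(c)$, and defining $F_{pq}, F_{qr}$ analogously, I note that each $F$ is monotone decreasing in $c$ (a larger $c$ infimizes over a larger index set), so the supremum defining $\dist[\cseq{\cM}]$ is really a limit as $c \to 0$. It therefore suffices to prove $F_{pr}(c_0) \leq \dist[\cseq{\cM}](p, q) + \dist[\cseq{\cM}](q, r)$ for each fixed $c_0$, which I will obtain by estimating $F_{pr}$ at still smaller scales and using that $\bigwedge_c 4 \cdot c = 0$, valid because $\cW$ is continuous at $0$.

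First I would fix $c$ and a single intermediate index $e \in \dom(q)$ with $e \leq c$; such an $e$ exists because $q$ is Cauchy, which forces $\dom(q)$ to be coinitial in $\nzero{W}$. Applying the triangle inequality of $\cM$ through the point $q(e)$ gives $\dist[\cM](p(a), r(b)) \leq \dist[\cM](p(a), q(e)) + \dist[\cM](q(e), r(b))$ for all $a, b$. Taking the infimum over $a, b \leq c$ and using the distributivity axiom $b + \bigwedge A = \bigwedge_{a \in A}(a + b)$ to separate the two independent index variables, I get $F_{pr}(c) \leq \phi_c(e) + \psi_c(e)$, where $\phi_c(e) = \bigwedge_{a \leq c} \dist[\cM](p(a), q(e))$ and $\psi_c(e) = \bigwedge_{b \leq c} \dist[\cM](q(e), r(b))$ are one-sided infima.

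The main obstacle is that $\phi_c(e)$ uses only the single intermediate index $e$, whereas $F_{pq}(c)$ in $\dist[\cseq{\cM}](p, q)$ infimizes over all intermediate indices as well; indeed the obvious comparison $\phi_c(e) \geq F_{pq}(c)$ points the wrong way. The key step resolving this is to exploit the Cauchy property of $q$ itself: for any $e' \in \dom(q)$ with $e' \leq c$ one has $\dist[\cM](p(a), q(e)) \leq \dist[\cM](p(a), q(e')) + (e + e')$, since $\dist[\cM](q(e'), q(e)) \leq e' + e$. Infimizing over $a \leq c$ and then over $e' \leq c$, and bounding $e + e' \leq c + c$, yields $\phi_c(e) \leq F_{pq}(c) + 2 \cdot c$, and symmetrically $\psi_c(e) \leq F_{qr}(c) + 2 \cdot c$. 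I expect this passage from a one-sided to a two-sided infimum, driven entirely by the Cauchy condition on $q$, to be the crux of the argument.

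Combining these gives $F_{pr}(c) \leq F_{pq}(c) + F_{qr}(c) + 4 \cdot c \leq \dist[\cseq{\cM}](p, q) + \dist[\cseq{\cM}](q, r) + 4 \cdot c$ for every $c$, using $F_{pq}(c) \leq \bigvee_{c'} F_{pq}(c')$ and likewise for $q, r$. Finally, for a fixed $c_0$, monotonicity gives $F_{pr}(c_0) \leq F_{pr}(c)$ for all $c \leq c_0$, so $F_{pr}(c_0)$ lies below $\dist[\cseq{\cM}](p, q) + \dist[\cseq{\cM}](q, r) + 4 \cdot c$ for all such $c$; taking the infimum over $c \leq c_0$ and invoking $\bigwedge_c 4 \cdot c = 0$ removes the error term, and a supremum over $c_0$ yields the claim. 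The only routine points left are the repeated appeals to the distributivity axiom and the verification that $\dom(q)$ is coinitial, both of which are straightforward.
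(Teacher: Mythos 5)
Your proof is correct and follows essentially the same route as the paper's: both pass the triangle inequality through intermediate points of $q$, use the Cauchy condition on $q$ together with distributivity of $+$ over $\bigwedge$ to control the mismatch between intermediate indices at a cost proportional to $c$, and then remove the error term using continuity at $0$. The only cosmetic differences are that you fix a single intermediate index $e$ and compare the one-sided infima to the two-sided ones separately (incurring an error of $4 \cdot c$), whereas the paper infimizes over two intermediate indices $e_0, e_1$ from the outset (error $2 \cdot c$), and that you spell out the final limit step which the paper compresses into ``as $c$ was arbitrary.''
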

\begin{proof}
For all $c \in \nzero{W}$ we have 
\begin{align*}
 \dist[\cseq{\cM}](p, q) & + \dist[\cseq{\cM}](q, r) + 2 \cdot c  \\
& \geq\!\!\!\! \bigwedge_{\substack{a\in \dom(p)\\ e \in \dom(q) \\ a, e \leq c}}\!\!\!\! \dist[\cseq{\cM}](p(a), q(e)) + \!\!\!\! \bigwedge_{\substack{a\in \dom(p)\\ e \in \dom(q) \\ a, e \leq c}}\!\!\!\! \dist[\cseq{\cM}](q(e), r(b)) + 2 \cdot c\\
\geq &  \!\!\!\! \bigwedge_{\substack{a\in \dom(p)\\ e_0, e_1 \in \dom(q) \\ b \in \dom(r) \\a, b, e_0, e_1\leq c}}\!\!\!\!\dist[\cseq{\cM}](p(a), q(e_0)) + e_0 + e_1 + \dist[\cseq{\cM}](q(e_1), r(b))\\
\geq & \bigwedge_{\substack{a\in \dom(p)\\ e_0, e_1 \in \dom(q) \\ b \in \dom(r) \\a, b, e_0, e_1\leq c}} \!\!\!\!\dist[\cseq{\cM}](p(a), q(e_0)) + \dist[\cseq{\cM}](q(e_0), q(e_1)) + \dist[\cseq{\cM}](q(e_1), r(b)) \\
\geq & \!\!\!\!\bigwedge_{\substack{a\in \dom(p)\\ b \in \dom(r) \\a, b\leq c}}\!\!\!\!\dist[\cseq{\cM}](p(a), r(b))
\end{align*}

But as $c\in \nzero{W}$ was arbitrary this implies 
\[
\dist[\cseq{\cM}](p, r)  \leq \dist[\cseq{\cM}](p, q) + \dist[\cseq{\cM}](q, r).
\]
as desired.
\end{proof}

The following is standard and we include a brief proof for completeness.
\begin{lemma}
\label{Properties of equivalence of Cauchy sequences}
Suppose $\cW$ is a continuous distance monoid and $p, q, r$ are Cauchy sequences. 
\begin{itemize}
\item[(a)] $p \equiv_{\cM} p$. 

\item[(b)] $p \equiv_{\cM} q \rightarrow q \equiv_{\cM} p$. 

\item[(c)] $p \equiv_{\cM} q \And q \equiv_{\cM} r \rightarrow p \equiv_{\cM} r$. 

\item[(d)] $p_0 \equiv_{\cM} q_0 \And p_1 \equiv_{\cM} q_1 \rightarrow \dist[\cseq{\cM}](p_0, q_0) = \dist[\cseq{\cM}](p_1,q_1)$. 

\item[(e)] $p$ converges to $x$ if and only if $p \equiv_{\cM} \widehat{x}$ where $\widehat{x}\:\nzero{W} \to W$ is the constant function with value $x$. 

\item[(f)] There is a maximal Cauchy sequence $p^*$ with $p^* \equiv_{\cM} p$. 

\item[(g)] If $p$ is a Cauchy sequence and $\alpha$ is an initial sequence with $\range(\alpha) \subseteq \dom(p)$ then $p\rest[\range(\alpha)]$ is a Cauchy sequence and $p \equiv_{\cM} p \rest[\range(\alpha)]$. 
\end{itemize}
\end{lemma}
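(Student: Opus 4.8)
The plan is to dispatch (a)–(d) from the definitions plus \cref{Transitivity of distance on Cauchy sequences}, and to handle (e)–(g) by direct computation resting on two facts: that the domain of any Cauchy sequence is coinitial in $\nzero{W}$, and the distributivity law $b + \bigwedge A = \bigwedge_{a \in A}(a+b)$. First I would record a reduction used repeatedly: since $\dist[\cseq{\cM}](p,q)$ is the \emph{join} over $c$ of the nonnegative inner infima $\bigwedge_{a \in \dom p,\, b \in \dom q,\, a,b \leq c} \dist[\cM](p(a), q(b))$, and $0$ is the minimum, we have $\dist[\cseq{\cM}](p,q) = 0$ iff each such inner infimum is $0$. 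I would also note that $\bigwedge \dom(p) = 0$ (so $\dom(p)$ is coinitial): from $b \geq 0$ we get $a + b \geq a \geq \bigwedge\dom(p)$, hence $0 = \bigwedge_{a,b \in \dom p}(a+b) \geq \bigwedge\dom(p) \geq 0$. Part (a) is then immediate: $\dist[\cM](p(a),p(b)) \leq a+b$, and by distributivity applied to the coinitial set $\{a \in \dom p : a \leq c\}$ we get $\bigwedge_{a,b \leq c}(a+b) = \bigwedge_{b \leq c}\bigl(b + \bigwedge_{a \leq c} a\bigr) = 0$, so every inner infimum vanishes. Part (b) is trivial because $\equiv_{\cM}$ is defined by a symmetric pair of conditions, and (c) follows by invoking \cref{Transitivity of distance on Cauchy sequences} in each direction to get $\dist[\cseq{\cM}](p,r) \leq 0$ and $\dist[\cseq{\cM}](r,p) \leq 0$. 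For (d) — whose content is that $\dist[\cseq{\cM}]$ is unchanged when each argument is replaced by an $\equiv_{\cM}$-equivalent sequence — I would apply the triangle inequality twice, $\dist[\cseq{\cM}](p_0,p_1) \leq \dist[\cseq{\cM}](p_0,q_0) + \dist[\cseq{\cM}](q_0,q_1) + \dist[\cseq{\cM}](q_1,p_1) = \dist[\cseq{\cM}](q_0,q_1)$, and symmetrically.

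For (e) I would first check that $\widehat{x}$ is genuinely a Cauchy sequence: continuity at $0$ gives $\bigwedge_{a,b \in \nzero{W}}(a+b) = 0$, and $\dist[\cM](x,x) = 0 \leq a+b$. The two distances then simplify because one coordinate is constant, so that $\dist[\cseq{\cM}](\widehat{x},p) = \bigvee_c \bigwedge_{b \in \dom p,\, b \leq c} \dist[\cM](x, p(b))$ and $\dist[\cseq{\cM}](p,\widehat{x}) = \bigvee_c \bigwedge_{a \in \dom p,\, a \leq c} \dist[\cM](p(a), x)$. These are exactly the two expressions appearing in the definition of convergence, one for each direction of the asymmetric distance, so $p \equiv_{\cM} \widehat{x}$ holds iff both vanish iff $p$ converges to $x$.

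For (f) I would use coinitiality of $A = \dom(p)$ to choose, for each $a \in \nzero{W}$, some $b_a \in A$ with $b_a \leq a$ (taking $b_a = a$ when $a \in A$, so that $p^*$ extends $p$), and set $p^*(a) = p(b_a)$. Then $\dist[\cM](p^*(a),p^*(a')) = \dist[\cM](p(b_a),p(b_{a'})) \leq b_a + b_{a'} \leq a + a'$, and $\bigwedge_{a,a' \in \nzero{W}}(a+a') = 0$ by continuity at $0$, so $p^*$ is a maximal Cauchy sequence. Equivalence $p^* \equiv_{\cM} p$ follows from the reduction: for each $c$, choosing $a \leq c$ and $b \in A$ with $b \leq c$ gives $\dist[\cM](p^*(a),p(b)) \leq b_a + b \leq a + b$, and $\bigwedge_{a,b \leq c}(a+b) = 0$ by distributivity, so each inner infimum is $0$ in both directions.

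Part (g) is the same computation as (f): since $\alpha$ is an initial sequence, $\range(\alpha) \subseteq A$ is coinitial, so $p\rest[\range(\alpha)]$ inherits the Cauchy inequality from $p$ and satisfies $\bigwedge_{a,b \in \range(\alpha)}(a+b) = 0$; and because the relevant distances are again bounded by $a+b$ with $a,b$ arbitrarily small, the reduction yields $p \equiv_{\cM} p\rest[\range(\alpha)]$. The hard part throughout is bookkeeping rather than any single idea: because $\dist[\cM]$ is only a quasi-metric, every convergence and equivalence claim must be checked in both orders, and the outer-join/inner-meet shape of $\dist[\cseq{\cM}]$ must be reorganized via the distributivity law and coinitiality of the index set before the basic bound $\dist[\cM] \leq a+b$ can be brought to bear.
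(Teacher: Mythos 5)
Your proof is correct and follows essentially the same route as the paper's (which simply declares most parts immediate and, for (f), uses the identical construction $p^*(a) = p(b)$ for some $b \leq a$ in $\dom(p)$); your added detail about using distributivity and coinitiality of the domain to see that the inner infima vanish is exactly the bookkeeping the paper leaves implicit. Your reading of (d) as well-definedness of $\dist[\cseq{\cM}]$ on $\equiv_{\cM}$-classes (rather than the literal, vacuously true statement printed in the lemma) is the intended one, and your two-sided triangle-inequality argument for it is right.
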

\begin{proof}
Note (a) is immediate from the definition of a Cauchy sequence,  (b) is immediate from the definition of $\equiv_\cM$, (c) and (d) follows from \cref{Transitivity of distance on Cauchy sequences} and (e) is immediate from the definition of a Cauchy sequence converging to an element.  (g) is immediate from the definition of an initial sequence. 

To see (f) suppose $p$ is a non-maximal Cauchy sequence. Let $p^*\:\nzero{W} \to M$ be any map such that $(\forall a \in \nzero{W})(\exists b \in \dom(p))\, b \leq a \text{ and }p^*(a) = p(b)$. It is then straightforward to check $p^*$ is a Cauchy sequence with $p \equiv_{\cM} p^*$. 
\end{proof}

In particular, $\cM$ is Cauchy complete if and only if every maximal Cauchy sequence converges to an element.  The following is immediate. 
\begin{lemma}
\label{Embedding of generalized metric space into completion}
Suppose $\cW$ is a continuous distance monoid and $\cM$ is a $\cW$-metric space. For $x \in M$ let $\widehat{x}\:\nzero{W} \to M$ be the constant function with value $x$. 
\begin{itemize}
\item For all $x \in M$, $\widehat{x} \in \cseq{\cM}$. 

\item For all $x, y \in M$, $\dist[\cM](x, y) = \dist[\cseq{\cM}](\widehat{x}, \widehat{y})$. 

\end{itemize}
\end{lemma}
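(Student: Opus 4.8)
The plan is to verify each bullet by directly unwinding the relevant definition, the single nontrivial input being that continuity of $\cW$ at $0$ is exactly what makes the constant sequences admissible as Cauchy sequences. So I would begin by recalling that for a distance monoid, continuity at $0$ means $\bigwedge_{a, b \in \nzero{W}} a + b = 0$, and that $\cW$ being \emph{continuous} includes this.

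For the first bullet, I would check the two defining clauses of a Cauchy sequence for $\widehat{x}$ with domain $A = \nzero{W}$. The clause $\bigwedge_{a, b \in A} a + b = 0$ becomes, since $A = \nzero{W}$, precisely the assertion that $\cW$ is continuous at $0$, which holds by hypothesis. The second clause asks that $\dist[\cM](\widehat{x}(a), \widehat{x}(b)) \leq a + b$ for all $a, b \in \nzero{W}$; since $\widehat{x}$ is constant this reduces to $\dist[\cM](x, x) \leq a + b$, and $\dist[\cM](x, x) = 0$ follows from the first metric axiom with $x = y$ (so $\dist[\cM](x,x) \Or \dist[\cM](x,x) = 0$, whence $\dist[\cM](x,x) = 0$). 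This gives $\widehat{x} \in \cseq{\cM}$.

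For the second bullet, I would substitute $p = \widehat{x}$ and $q = \widehat{y}$ into the definition of $\dist[\cseq{\cM}]$. Because both domains equal $\nzero{W}$ and both sequences are constant, $\dist[\cM](p(a), q(b)) = \dist[\cM](x, y)$ for every admissible pair $a, b$. Each inner infimum is thus taken over a nonempty index set (the pair $a = b = c$ is always admissible, as $c \in \nzero{W}$) of the single value $\dist[\cM](x,y)$, and so equals $\dist[\cM](x,y)$; the outer supremum over the nonempty set $\nzero{W}$ of this same constant then equals $\dist[\cM](x,y)$, as desired.

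I do not expect a genuine obstacle here, consistent with the statement being flagged as immediate: both bullets are definition-chasing. The one conceptual point worth isolating is that the hypothesis of continuity (rather than mere completeness) is essential for the first bullet — without continuity at $0$ the domain condition $\bigwedge_{a,b} a + b = 0$ could never be met, and indeed by \cref{Non-continuous distance monoids give rise to Cauchy complete spaces} there would be no Cauchy sequences at all, so even the constant sequences $\widehat{x}$ would not qualify.
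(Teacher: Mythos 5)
Your proof is correct and is exactly the definition-unwinding the paper intends — the paper states this lemma as ``immediate'' and gives no proof, and your verification (continuity at $0$ gives the domain condition $\bigwedge_{a,b\in\nzero{W}} a+b = 0$ for the constant sequence, $\dist[\cM](x,x)=0$ gives the second clause, and the inner infima and outer supremum all collapse to the constant value $\dist[\cM](x,y)$) is precisely what that claim elides. No discrepancy with the paper's approach.
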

By \cref{Properties of equivalence of Cauchy sequences} there is a $\cW$-space $(\cseq{\cM}/\equiv_{\cM}, \dist[\cseq{\cM}]^*)$ where, for all $x, y \in \cseq{\cM}/\equiv_{\cM}$, $\dist[\cseq{\cM}]^*(x, y) = \dist[\cseq{\cM}](a, b)$ for any $a \in x$ and $b \in y$. 
\cref{Embedding of generalized metric space into completion} says the map $x \mapsto \widehat{x}$ is an embedding of $(M, \dist[\cM])$ into $(\cseq{\cM}/\equiv_{\cM}, \dist[\cseq{\cM}]^*)$ which preserves distance.  

\begin{definition}
\label{Cauchy completion with continuous distance monoids}
Suppose $\cW$ is a continuous distance monoid and $\cM$ is a $\cW$-metric space. Let 
\[
\cc{M} = M \cup \{x \in \cseq{\cM}/\equiv_{\cM} \st (\forall a \in M)\, \widehat{a} \not \in x\}.
\]

Let $\dist[\cc{\cM}]\: \cc{M} \times \cc{M} \to W$ be the map where the following hold.  
\begin{itemize}
\item For $x,y \in M$ let $\dist[\cc{\cM}](x, y) = \dist[\cM](x, y)$. 

\item For $x \in M$ and $y \in \cc{M} \setminus M$ let $\dist[\cc{\cM}](x, y) = \dist[\cseq{\cM}](\widehat{x}, y)$ and $\dist[\cc{\cM}](x, y) = \dist[\cseq{\cM}](y, \widehat{x})$. 

\item For $x, y \in \cc{M} \setminus M$ let $\dist[\cc{\cM}](x, y) = \dist[\cseq{\cM}](x, y)$. 
\end{itemize}
We let $\cc{\cM} = (\cc{M}, \dist[\cc{\cM}])$ and call it the \defn{Cauchy completion} of $\cM$. We call $(\cM, \cW)$ the \defn{Cauchy completion} of $(\cM, \cW)$. 
\end{definition}

By \cref{Embedding of generalized metric space into completion} there is isomorphism $\iota\:\cc{\cM} \to (\cseq{\cM}/\equiv_{\cM}, \dist[\cseq{\cM}]^*)$ where $\iota(x) = \widehat{x}$ if $x \in M$ and $\iota(x) = x$ if $x \not \in M$. The reason why we define the Cauchy completion to be as in \cref{Cauchy completion with continuous distance monoids} instead of $(\cseq{\cM}/\equiv_{\cM}, \dist[\cseq{\cM}]^*)$ is it will be important from the perspective of  relativization that $M$ is contained in $\cc{M}$ as a subset.

The following is standard and we provide the proof for completeness. 
\begin{proposition}
\label{Cauchy completion is the completion}
Suppose $\cW$ is continuous distance monoid and $\cM$ is a $\cW$-metric space. 
\begin{itemize}
\item[(a)] $\cc{\cM}$ is a Cauchy complete $\cW$-metric space.

\item[(b)] If $i\:\cM \to \cX$ is a non-expanding map where $\cX$ is a Cauchy complete $\cW$-metric space then there is a unique map $j\:\cc{\cM} \to \cX$ such that $j \rest[M] = i$.

\item[(c)] There is an isomorphism from $\cc{\cc{\cM}}$ to $\cc{\cM}$ which is the identity on $M$. 
\end{itemize}
\end{proposition}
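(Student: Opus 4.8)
The plan is to handle the three parts in order, transporting all computations along the isomorphism $\iota\:\cc{\cM} \to (\cseq{\cM}/\equiv_{\cM}, \dist[\cseq{\cM}]^*)$ so that every point of $\cc{\cM}$ is treated as a class of Cauchy sequences, and to isolate at the outset a density fact that feeds into all three parts. For (a), I would first check that $\cc{\cM}$ is a $\cW$-metric space: transitivity is immediate from \cref{Transitivity of distance on Cauchy sequences}, well-definedness of the distance on classes is \cref{Properties of equivalence of Cauchy sequences}(d), and the separation axiom holds because, via $\iota$, $\cc{M}$ is exactly the quotient by $\equiv_{\cM}$, so two points have distance $0$ in both directions precisely when they are $\equiv_{\cM}$-equivalent, i.e.\ equal in $\cc{M}$. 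I would then record the density statement: for every $P \in \cc{M}$ and every $c \in \nzero{W}$ there is $x \in M$ with $\dist[\cc{\cM}](P, x) \leq c$ and $\dist[\cc{\cM}](x, P) \leq c$. This follows by choosing a representative Cauchy sequence $p$ of $P$ and taking $x = p(b)$ for a suitably small $b \in \dom(p)$, using the Cauchy estimate $\dist[\cM](p(a), p(b)) \leq a + b$ together with \cref{Embedding of generalized metric space into completion}.

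The heart of the argument is Cauchy completeness of $\cc{\cM}$. By \cref{Properties of equivalence of Cauchy sequences}(f) it suffices to make an arbitrary maximal Cauchy sequence $P\:\nzero{W} \to \cc{M}$ converge. For each $a \in \nzero{W}$ I would use density to pick $q(a) \in M$ approximating $P(a)$ to within $a$ in both directions, and then verify, via the triangle inequality and the Cauchy property of $P$, first that $q$ is a Cauchy sequence on $\cM$ (so that $[q] \in \cc{M}$), and second that $P$ converges to $[q]$. This diagonal construction, together with the bookkeeping needed to drive both $\dist[\cM](\cdot,\cdot)$ and its transpose to $0$ because the metric is not assumed symmetric, is the step I expect to be the main obstacle; the remaining verifications are essentially formal.

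For (b), given a non-expanding $i\:\cM \to \cX$ with $\cX$ Cauchy complete, I would set $j = i$ on $M$ and send each $P \in \cc{M} \setminus M$ to the limit in $\cX$ of $i \circ p$, where $p$ is any representative of $P$. Here $i \circ p$ is Cauchy on $\cX$ because $i$ is non-expanding, and it converges since $\cX$ is Cauchy complete; its limit is unique by separation in $\cX$ and is independent of the representative because $p \equiv_{\cM} p'$ forces $i \circ p \equiv_{\cX} i \circ p'$. That $j$ is non-expanding follows by passing the inequality $\dist[\cX](i(p(a)), i(q(b))) \leq \dist[\cM](p(a), q(b))$ to the relevant infima and suprema. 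Uniqueness of $j$ is where density reenters: any non-expanding extension of $i$ is continuous, so two such extensions agree on the dense set $M$ and hence everywhere by uniqueness of limits in $\cX$.

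Finally, (c) is a short consequence of (a). Since $\cc{\cM}$ is Cauchy complete, every Cauchy sequence on $\cc{\cM}$ converges to some $z \in \cc{M}$, so by \cref{Properties of equivalence of Cauchy sequences}(e) its class contains the constant sequence $\widehat{z}$. Thus the set $\{x \in \cseq{\cc{\cM}}/\equiv_{\cc{\cM}} \st (\forall a \in \cc{M})\, \widehat{a} \notin x\}$ is empty, giving $\cc{\cc{M}} = \cc{M}$ as sets, and the two distance functions agree on $\cc{M}$ by construction; hence the identity on $\cc{M}$ is the required isomorphism.
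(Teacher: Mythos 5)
Your proposal follows essentially the same route as the paper: a diagonal/density construction for Cauchy completeness in (a), pushing a maximal representative forward along $i$ and taking its limit for (b) with uniqueness via density, and (c) as a formal consequence (you derive it from (a) alone by noting no new classes are added, the paper cites (a) and (b); both work). The one caveat, at the step you yourself flag as the main obstacle, is that the approximating sequence $q$ obtained by choosing $q(a)\in M$ within $a$ of $P(a)$ only satisfies $\dist[\cM](q(a),q(b))\leq 2\cdot(a+b)$ rather than $a+b$, so it is not literally an element of $\cseq{\cM}$ and one must reindex (e.g.\ regard $q(a)$ as indexed by $2\cdot a$, using that $\bigwedge_{a}2\cdot a=0$) --- a wrinkle equally present in the paper's own diagonal $p^*(a)=\widetilde{p}(a)(a)$.
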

\begin{proof}
First note that (c) follows immediately from (a) and (b). 

We now show (a). Note $\cc{\cM}$ is a $\cW$-metric space by \cref{Transitivity of distance on Cauchy sequences}. Suppose $p$ is a maximal Cauchy sequence in $\cc{\cM}$. 
Let $\widetilde{p}\:\nzero{W} \to \cc{M}$ be the map where $\widetilde{p}(a) = p(a)$ if $p(a) \not \in M$ and $\widetilde{p}(a) = \widehat{p(a)}$ if $p(a) \in M$. Let $p^*\:\nzero{W} \to \cc{M}$ be the map where $p^*(a) = \widetilde{p}(a)(a)$ for all $a \in \nzero{W}$. 

\begin{claim}
$p^* \in \cc{\cM}$. 
\end{claim}
\begin{proof}
Suppose $a, b, c, e, t\in \nzero{W}$. Then 
\begin{align*}
\dist[\cM](p^*(a), p^*(b)) & = \dist[\cM](\widehat{p}(a)(a), \widetilde{p}(b)(b)) \\
&= \dist[\cM](\widetilde{p}(a)(a), \widetilde{p}(a)(c)) + \dist[\cM](\widetilde{p}(a)(c), \widetilde{p}(b)(e)) + \dist[\cM](\widetilde{p}(b)(e), \widetilde{p}(b)(b))\\
&\leq a + c + \dist[\cM](\widetilde{p}(a)(c), \widetilde{p}(b)(e))+ b + e .
\end{align*}
Therefore 
\begin{align*}
\dist[\cM](p^*(a), p^*(b)) & \leq \bigvee_{\substack{s \in \nzero{W} \\ s \leq t}}\bigwedge_{\substack{c, e \in \nzero{W}
\\ c, e \leq s}} \dist[\cM](\widetilde{p}(a)(c), \widetilde{p}(b)(e)) + a + c + b + e \\
&  \leq 4 \cdot t +  \bigvee_{\substack{s \in \nzero{W} \\ s \leq t}}\bigwedge_{\substack{c, e \in \nzero{W}
\\ c, e \leq s}}\dist[\cM](\widetilde{p}(a)(c), \widetilde{p}(b)(e))  \\
& = 4 \cdot t + \dist[\cc{\cM}](\widetilde{p}(a), \widetilde{p}(b)) \leq 4 \cdot t + a + b. 
\end{align*}
But because $t$ was arbitrary this implies $\dist[\cM](p^*(a), p^*(b)) \leq a + b$ and the claim holds. 
\end{proof}

\begin{claim}
$p$ converges to $p^*$. 
\end{claim}
\begin{proof}
Let $a , t\in \nzero{W}$. We then have the following. 
\begin{align*}
\dist[\cc{\cM}](p^*, p(a)) & = \dist[\cseq{\cM}](p^*, \widetilde{p}(a)) \\
& =\bigvee_{s \in \nzero{W}} \bigwedge_{\substack{b, c \in \nzero{W}\\ b, c \leq s}} \dist[\cM](p^*(b), \widetilde{p}(a)(c)) 
=\bigvee_{\substack{s \in \nzero{W}\\ s \leq t}} \bigwedge_{\substack{b, c \in \nzero{W}\\ b, c \leq s}} \dist[\cM](p^*(b), \widetilde{p}(a)(c))\\%
&= \bigvee_{\substack{s \in \nzero{W}\\ s \leq t}} \bigwedge_{\substack{b, c \in \nzero{W}\\ b, c \leq s}} \dist[\cM](\widetilde{p}(b)(b), \widetilde{p}(a)(c)) \\
& \leq \bigvee_{\substack{s \in \nzero{W}\\ s \leq t}} \bigwedge_{\substack{b, c, e \in \nzero{W}\\ b, c, e \leq s}} \dist[\cM](\widetilde{p}(b)(b), \widetilde{p}(b)(e)) + \dist[\cM](\widetilde{p}(b)(e), \widetilde{p}(a)(c)) \\
& = \bigvee_{\substack{s \in \nzero{W}\\ s \leq t}} \bigwedge_{\substack{b, c, e \in \nzero{W}\\ b, c, e \leq s}} b + e + \dist[\cM](\widetilde{p}(b)(e), \widetilde{p}(a)(c)) \\
& \leq \bigvee_{\substack{s \in \nzero{W}\\ s \leq t}} \bigwedge_{\substack{b, c, e \in \nzero{W}\\ b, c, e \leq s}} 2 \cdot t + \dist[\cM](\widetilde{p}(b)(e), \widetilde{p}(a)(c)) \\
& =  2\cdot t+ \bigvee_{\substack{s \in \nzero{W}\\ s \leq t}} \bigwedge_{\substack{b \in \nzero{W}\\ b \leq s}} \dist[\cc{\cM}](\widetilde{p}(b), \widetilde{p}(a)) \\
& \leq 2\cdot t +  \bigwedge_{\substack{b \in \nzero{W}\\ b \leq s}} a + b  = 2 \cdot t + a.
\end{align*}
As $t$ was arbitrary we have $\dist[\cc{\cM}](p^*, p(a)) \leq a$ and so $p$ converges to $p^*$. 
\end{proof}
As $p$ was arbitrary $\cc{\cM}$ is Cauchy complete and so (a) holds.

We now show (b). Suppose $\cX$ is Cauchy complete and $i\:\cM \to \cX$ is a non-expanding map. Suppose  $j_0, j_1\:\cc{\cM} \to \cX$ are non-expanding maps with $j_0\rest[M] = j_1\rest[M] = i$. Let $x \in \cc{M}\setminus M$ with $x^* \in x$ be maximal and $a \in \nzero{W}$. Then 
\begin{align*}
\dist[\cX](j_0(x), j_1(x))  & \leq \dist[\cX](j_0(x), j_0(x(a))) + \dist[\cX](j_0(x(a)), j_1(x(a))) + \dist[\cX](j_1(x(a)), j_1(x))\\
&\leq \dist[\cc{\cM}](x, x(a)) + 0 + \dist[\cc{\cM}](x(a), x).
\end{align*}
But as $x$ is a Cauchy sequence in $\cM$ this implies $\dist[\cX](j_0(x), j_1(x)) = 0$. An identical argument shows $\dist[\cX](j_1(x), j_0(x)) = 0$ and so $j_0(x) = j_1(x)$. In particular if there is a non-expanding $j\:\cc{\cM} \to \cX$ extending $i$ it must be unique.

Suppose $x \in \cc{\cM}$. If $x \in M$ let $J(x) = \widehat{i(x)}$. If $x \in \cc{M} \setminus M$ let $x^* \in x$ be maximal and define $J(x)\:\nzero{W} \to X$ to be such that $J(X)(a) = i(x^*(a))$ for all $a \in \nzero{W}$.

Suppose $x \in \cc{M} \setminus M$ and $x^* \in x$ is maximal. Then 
\[
\dist[\cX](J(x)(a), J(x)(b)) \leq \dist[\cM](x^*(a), x^*(b)) \leq a + b.
\]
Therefore $J(x)$ is a Cauchy sequence in $\cX$. Let $j(x)$ be the element $J(x)$ converges to, which must exist as $\cX$ is Cauchy complete. 

Next suppose $x,y \in \cc{\cM}$. Let $x^* \in x$ be maximal if $x \not \in M$ and $x^* = \widehat{x}$ if $x \in M$. Define $y^*$ similarly. Then 
\begin{align*}
\dist[\cX](j(x), j(y)) & = \bigvee_{s \in \nzero{W}} \bigwedge_{\substack{c, e \in \nzero{W} \\ c, e \leq s}} \dist[\cX](J(x)(c), J(y)(e)) \\
& \leq \bigvee_{s \in \nzero{W}} \bigwedge_{\substack{c, e \in \nzero{W} \\ c, e \leq s}} \dist[\cM](x^*(c), y^*(e)) = \dist[\cc{\cM}](x, y).
\end{align*}
Therefore (b) holds. 
\end{proof}

As $\cM$ is always Cauchy complete if $\cW$ is not continuous at $0$ the following definition will be useful. 

\begin{definition}
Suppose $\cW$ is a distance monoid which is not continuous at $0$ and $\cM$ is a $\cW$-metric space. We then let $\cc{\cM}$ be the $\mcomp{\cW}$-metric space with underlying set $M$ and such that $\dist[\cc{\cM}](x, y) = \dist[\cM](x, y)$ for all $x, y \in M$, i.e. $\cc{\cM}$ is $\cM$ considered as a $\mcomp{\cW}$-metric space. 
\end{definition}

\begin{definition}
Suppose $\cW$ is a distance monoid $\cM$ is a $\cW$-metric space. We call $(\cc{\cM}, \mcomp{\cW})$ the \defn{Cauchy completion} of $(\cM, \cW)$. When $\cW$ is complete we call $\cc{\cM}$ the \defn{Cauchy completion} of $\cM$. 
\end{definition}

Note the following is immediate from \cref{Relativization of continuous distance monoids} and \cref{Cauchy completion is the completion} in the case $\cW$ is a continuous distance monoid and \cref{Non-continuous distance monoids give rise to Cauchy complete spaces} in the case $\cW$ is a complete distance monoid which is not continuous at $0$. 
\begin{proposition}
\label{Relativization of complete generalized metric space}
Suppose the following hold.
\begin{itemize}
\item $V_0 \subseteq V_1$ are models of set theory.

\item $(\cM, \cW) \in V_0$.

\item $V_0 \models \cW$ is a complete distance monoid and $\cM$ is a Cauchy complete $\cW$-metric space. 

\item $(\cN, \cX) \in V_1$ is the Cauchy completion of $(\cM, \cW)$ in $V_1$. 
\end{itemize}
Then $(\cN, \cX)$ is the relativization of $(\cM, \cW)$ to $V_1$ with respect to the property of being a Cauchy complete generalized metric space. 
\end{proposition}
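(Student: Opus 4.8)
The plan is to verify the four clauses of \cref{Definition:Relativization} for the $\GenMetLang$-structure $(\cN, \cX)$, with $P$ the property ``is a Cauchy complete generalized metric space'' (and no parameter $A$). The guiding idea is to split every $\GenMetLang$-structure into its $\DistMonLang$-reduct (the distance monoid) and its metric fibre, and then to feed the two pieces into the two universal properties already available: the relativization of a complete distance monoid from \cref{Relativization of continuous distance monoids} (whose proof uses only completeness of $\cW$, not continuity at $0$), and the universal property of the Cauchy completion from \cref{Cauchy completion is the completion}. As the remark preceding the statement indicates, I would split into the case where $\cW$ is continuous at $0$ and the case where it is not.

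Suppose first that $\cW$ is continuous at $0$, so $\cX = \mcomp{\cW}^{V_1}$ and $\cN = \cc{\cM}^{V_1}$. Clause (a) is then the conjunction of \cref{Relativization of continuous distance monoids} (which makes $\cX$ a complete distance monoid) and \cref{Cauchy completion is the completion}(a) (which makes $\cN$ Cauchy complete), both read in $V_1$; clause (b) is immediate, since the Dedekind–MacNeille completion contains $W$ and, by construction in \cref{Cauchy completion with continuous distance monoids}, $M \subseteq \cc{M}$, with distances preserved by \cref{Embedding of generalized metric space into completion}. For clause (c), an embedding $i\:(\cN,\cX)\to(\cN,\cX)$ fixing $M \cup W$ restricts to an embedding of $\cX$ fixing $W$, which must be the identity by the uniqueness half of \cref{Completions of distance monoid exists}; once $i$ fixes all of $\cX$ it is a non-expanding self-map of $\cc{\cM}$ fixing $M$, hence the identity by the uniqueness in \cref{Cauchy completion is the completion}(b). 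For clause (d), given a competitor $(\cN^*,\cX^*)$ satisfying $P$ and containing $(\cM,\cW)$, I would first extend the inclusion $W \hookrightarrow \cX^*$ to an embedding $\cX \to \cX^*$ via \cref{Completions of distance monoid exists}, and then, viewing $\cN^*$ as a Cauchy complete metric space over the image of $\cX$, extend the inclusion $M \hookrightarrow N^*$ to a non-expanding map $\cc{\cM} \to \cN^*$ via \cref{Cauchy completion is the completion}(b); these two maps assemble into the required $\GenMetLang$-homomorphism fixing $M \cup W$.

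Now suppose $\cW$ is not continuous at $0$. Here I would first check that passing to $V_1$ does not create continuity at $0$: the value $\bigwedge_{a,b \in \nzero{W}}(a+b)$ is the infimum of a fixed subset of $W$, and such infima that already exist in the complete monoid $\cW$ of $V_0$ are preserved by the Dedekind–MacNeille completion, so this meet is still positive in $\mcomp{\cW}^{V_1}$. Consequently \cref{Non-continuous distance monoids give rise to Cauchy complete spaces}, applied in $V_1$, shows that every $\mcomp{\cW}^{V_1}$-metric space is automatically Cauchy complete and that the Cauchy completion adds no points, so $\cN$ has underlying set $M$ and $\cX = \mcomp{\cW}^{V_1}$. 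All metric content of the four clauses is then vacuous, and the statement collapses to the monoid relativization of \cref{Relativization of continuous distance monoids}: clause (d), in particular, needs only the extension $\cX \to \cX^*$ from \cref{Completions of distance monoid exists}, since the identity $M \to N^*$ already serves as the metric part of the homomorphism.

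The hard part will not be any single calculation but the bookkeeping forced by the two-sorted language: I must keep the monoid homomorphism and the metric homomorphism compatible, so that a distance taking values in $\cX$ is sent correctly into $\cX^*$, and in clause (c) I must peel off the monoid sort before addressing the metric sort. The only genuinely non-formal point is the preservation claim in the non-continuous case, namely that the Dedekind–MacNeille completion computed in $V_1$ does not collapse $\bigwedge_{a,b\in\nzero{W}}(a+b)$ down to $0$; everything else is an assembly of the universal properties cited above.
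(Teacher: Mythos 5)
Your proposal follows the same route as the paper, which simply declares the result immediate from \cref{Relativization of continuous distance monoids} and \cref{Cauchy completion is the completion} when $\cW$ is continuous at $0$, and from \cref{Non-continuous distance monoids give rise to Cauchy complete spaces} otherwise --- exactly your case split and exactly your ingredients. Your clause-by-clause verification (including the observation that non-continuity at $0$ persists after passing to $\mcomp{\cW}^{V_1}$) only fills in details the paper leaves implicit, so the argument is correct and essentially identical in approach.
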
 

\begin{definition}
We say a set $D \subseteq \cM$ is \defn{dense} if there is a isomorphism $\cc{D, \dist[\cM]}$ to $\cc{\cM}$ which is the identity on $D$.
\end{definition}

The following is immediate from \cref{Cauchy completion is the completion}. 
\begin{lemma}
\label{Dense preserved by taking Cauchy completion}
If $D$ is dense in $\cM$ then $D$ is dense in $\cc{\cM}$. 
\end{lemma}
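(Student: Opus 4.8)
The plan is to obtain the required witness for density of $D$ in $\cc{\cM}$ by composing the isomorphism that witnesses density of $D$ in $\cM$ with the idempotence isomorphism $\cc{\cc{\cM}} \cong \cc{\cM}$ supplied by \cref{Cauchy completion is the completion}(c).

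First I would observe that the Cauchy completion of $D$ is the same object whether $D$ is regarded as a subspace of $\cM$ or of $\cc{\cM}$. Indeed, $D \subseteq M \subseteq \cc{M}$, and by the construction in \cref{Cauchy completion with continuous distance monoids} the map $\dist[\cc{\cM}]$ restricted to $M \times M$ is exactly $\dist[\cM]$; hence $\dist[\cc{\cM}]\rest[D \times D] = \dist[\cM]\rest[D \times D]$, so $(D, \dist[\cc{\cM}]\rest[D \times D])$ and $(D, \dist[\cM]\rest[D \times D])$ are literally the same $\cW$-metric space, giving $\cc{D, \dist[\cc{\cM}]} = \cc{D, \dist[\cM]}$.

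Next I would invoke the two isomorphisms and compose them. By the hypothesis that $D$ is dense in $\cM$, there is an isomorphism $\phi\: \cc{D, \dist[\cM]} \to \cc{\cM}$ which is the identity on $D$. By \cref{Cauchy completion is the completion}(c) there is an isomorphism $\cc{\cc{\cM}} \to \cc{\cM}$ that is the identity on $M$; let $\psi$ denote its inverse, so that $\psi\:\cc{\cM} \to \cc{\cc{\cM}}$ is an isomorphism which is the identity on $M$, and in particular on $D$. The composite $\psi \circ \phi\: \cc{D, \dist[\cc{\cM}]} \to \cc{\cc{\cM}}$ is then an isomorphism that is the identity on $D$, which is exactly the witness required for $D$ to be dense in $\cc{\cM}$.

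There is essentially no obstacle here: the content is entirely formal once one notes that the restricted distances agree (so the two Cauchy completions of $D$ coincide) and that \cref{Cauchy completion is the completion}(c) supplies the idempotence isomorphism in the correct direction to be composed with $\phi$. The only point needing a moment of care is this bookkeeping of directions and identifications. When $\cW$ is not continuous at $0$ the claim is degenerate, since in that case the Cauchy completion adds no new points and $\cc{\cM} = \cM$.
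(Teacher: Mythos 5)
Your argument is correct and matches the paper's intent: the paper dismisses this lemma as immediate from \cref{Cauchy completion is the completion}, and your composition of the density isomorphism for $D$ in $\cM$ with (the inverse of) the idempotence isomorphism from \cref{Cauchy completion is the completion}(c), after noting that $\dist[\cc{\cM}]$ and $\dist[\cM]$ agree on $D\times D$, is exactly the bookkeeping that justifies that claim. No gaps.
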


Note that $V_0 \subseteq V_1$ are models of set theory and $\cM$ is any $\cW$-metric space in $V_0$ then $(\cc{\cc{\cM, \cW}^{V_0}})^{V_1} = \cc{\cM, \cW}^{V_1}$.. In particular being a dense subset of a Cauchy complete generalized metric space is absolute.

\begin{lemma}
\label{Non-expanding maps have unique extensions to Cauchy complete generalized metric spaces}
Suppose $f\:\cM\to \cN$ is a non-expanding map. 
\begin{itemize}
\item[(a)] There is a unique map $\cc{f}\:\cc{\cM} \to \cc{\cN}$ which agrees with $f$ on $M$. 

\item[(b)] If $p \in x \in \cc{M} \setminus M$ then either 
\begin{itemize}
\item $f \circ p \in f(x) \in \cc{N} \setminus N$,  or 

\item $f \circ p$ converges to $f(x)$. 
\end{itemize}
\end{itemize}
\end{lemma}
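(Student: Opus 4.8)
The plan is to obtain both parts from the universal property of the Cauchy completion established in \cref{Cauchy completion is the completion}, and to make part (b) explicit by reading off the construction used in the proof of its clause (b). I will assume throughout that $\cW$ is continuous at $0$; when $\cW$ is complete but not continuous at $0$ we have $\cc{\cM} = \cM$ and $\cc{\cN} = \cN$ on the level of underlying sets, so (b) is vacuous and (a) holds with $\cc{f} = f$.

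For (a), the inclusion $\cN \hookrightarrow \cc{\cN}$ preserves distance, by \cref{Embedding of generalized metric space into completion} together with the definition of $\dist[\cc{\cN}]$ on $N \times N$; hence the composite $\cM \xrightarrow{f} \cN \hookrightarrow \cc{\cN}$ is non-expanding. Since $\cc{\cN}$ is Cauchy complete by \cref{Cauchy completion is the completion}(a), I apply \cref{Cauchy completion is the completion}(b) with target $\cc{\cN}$ to get a unique non-expanding map $\cc{f}\colon \cc{\cM} \to \cc{\cN}$ agreeing with $f$ on $M$, which is exactly the asserted extension together with its uniqueness.

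For (b), the first routine observation is that $f \circ p$ is a Cauchy sequence on $\cN$: its domain condition is inherited from $p$, and non-expansiveness gives $\dist[\cN](f(p(a)), f(p(b))) \leq \dist[\cM](p(a), p(b)) \leq a + b$. The crux is to identify $\cc{f}(x)$ with the limit of $f\circ p$ in $\cc{\cN}$. Unwinding the proof of \cref{Cauchy completion is the completion}(b), for a maximal representative $x^* \in x$ the value $\cc{f}(x)$ is defined as the limit of $f \circ x^*$. For an arbitrary $p \in x$ one has $p \equiv_{\cM} x^*$, and the non-expansiveness of $f$ transfers this to $f\circ p \equiv_{\cN} f\circ x^*$ (the nested $\bigvee$--$\bigwedge$ defining $\dist[\cseq{\cN}]$ only decreases under applying $f$); since equivalent Cauchy sequences share a limit, by \cref{Transitivity of distance on Cauchy sequences} and \cref{Properties of equivalence of Cauchy sequences}, the sequence $f\circ p$ converges to $\cc{f}(x) = f(x)$ as well.

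It then remains to split on whether $f(x) \in N$ or $f(x) \in \cc{N} \setminus N$. If $f(x) \in N$, then because $\dist[\cc{\cN}]$ restricts to $\dist[\cN]$ on $N$, the convergence of $f\circ p$ to $f(x)$ already occurs in $\cN$, which is the second alternative. If $f(x) \in \cc{N} \setminus N$, I must show $f\circ p \in f(x)$. The mechanism is the general fact that a Cauchy sequence $r$ with values in $N$ converges, in $\cc{\cN}$, to the point represented by its own class $[r]$; hence a limit lying in $\cc{N} \setminus N$ must equal $[f\circ p]$, giving $f\circ p \in f(x)$. I expect this last fact to be the main obstacle: proving it rigorously means unwinding the asymmetric definition of $\dist[\cseq{\cN}]$ and verifying both $\dist[\cseq{\cN}](w^*, f\circ p) = 0$ and $\dist[\cseq{\cN}](f\circ p, w^*) = 0$ for a maximal $w^* \in f(x)$, exactly the kind of nested infimum--supremum estimate already carried out in \cref{Cauchy completion is the completion}.
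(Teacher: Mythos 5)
Your proposal is correct and follows essentially the same route as the paper: part (a) is obtained from the universal property in \cref{Cauchy completion is the completion}(b) applied to the composite of $f$ with the distance-preserving inclusion $\cN \hookrightarrow \cc{\cN}$, and part (b) comes from observing that $f \circ p$ is a Cauchy sequence in $N$. The paper states both steps as immediate; you have simply supplied the details it omits.
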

\begin{proof}
Condition (a) is immediate from \cref{Cauchy completion is the completion}. Condition (b) is immediate from the fact that if $p \in x \in \cc{M} \setminus M$ then $f \circ p$ is a Cauchy sequence in $N$. 
\end{proof}

The following will be one of the main objects of study in this paper. 
\begin{definition}
Suppose $\cW$ is a complete distance monoid. A \defn{$\cW$-dynamical system} is a pair $(\cM, f)$ where $\cM$ is a Cauchy complete $\cW$-metric space and $f\:\cM \to \cM$ is a non-expanding map. By a \defn{generalized dynamical system} we mean a triple $(\cM, f, \cW)$ where $\cW$ is a complete distance monoid and $(\cM, f)$ is a $\cW$-dynamical system. 
\end{definition}

The following is immediate from \cref{Relativization of complete generalized metric space} and \cref{Non-expanding maps have unique extensions to Cauchy complete generalized metric spaces}. 
\begin{proposition}
If $(\cM, f, \cW) \in V_0$ is generalized dynamical system then the generalized dynamical system $(\cc{\cM}, \cc{f}, \mcomp{\cW})^{V_1}$ is the relativization of $(\cM, f, \cW)$ to $V_1$ with respect to being a generalized dynamical system. 
\end{proposition}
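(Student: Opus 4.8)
The plan is to verify the four clauses of \cref{Definition:Relativization} for the structure $(\cc{\cM}, \cc{f}, \mcomp{\cW})^{V_1}$ against the property $P$ = ``being a generalized dynamical system'' (which carries no essential parameter $A$), working in the language $\GenMetLang$ augmented by one unary function symbol of arity $S_{\mathrm{met}} \to S_{\mathrm{met}}$ interpreting the dynamics. The guiding idea is that forgetting the dynamics turns a generalized dynamical system into a Cauchy complete generalized metric space, so clauses (a), (b), (c) and the \emph{existence} of the homomorphism needed in (d) are inherited directly from \cref{Relativization of complete generalized metric space}; the only genuinely new content is that the extra function symbol behaves correctly, and that will be supplied by the uniqueness half of \cref{Cauchy completion is the completion} together with \cref{Non-expanding maps have unique extensions to Cauchy complete generalized metric spaces}.

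For clauses (a)--(c) I would reason as follows. By \cref{Relativization of complete generalized metric space}, $(\cc{\cM}, \mcomp{\cW})^{V_1}$ is a Cauchy complete generalized metric space in $V_1$ in which $\cM$ sits as a substructure with the identity a homomorphism. By \cref{Non-expanding maps have unique extensions to Cauchy complete generalized metric spaces}(a), $\cc{f}$ is the unique non-expanding self-map of $\cc{\cM}$ agreeing with $f$ on $M$; hence $(\cc{\cM}, \cc{f}, \mcomp{\cW})^{V_1}$ is a generalized dynamical system, giving (a), and since $\cc{f}$ extends $f$ the identity is a homomorphism of dynamical systems, giving (b). For (c), any self-embedding of $(\cc{\cM}, \cc{f}, \mcomp{\cW})$ fixing $M$ is in particular a self-embedding of the underlying metric space fixing $M$, hence an automorphism by clause (c) of the metric-space relativization; as such a map already commutes with $\cc{f}$ by hypothesis, it is an automorphism of the dynamical system.

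The substance is clause (d). Given any generalized dynamical system $(\cN^*, g, \cX^*) \in V_1$ with $M \subseteq N^*$ and the identity a homomorphism (so in particular $g$ extends $f$), clause (d) of \cref{Relativization of complete generalized metric space} already furnishes a metric-space homomorphism $h\:\cc{\cM} \to \cN^*$ fixing $M$; what remains is to check that $h$ intertwines the dynamics, i.e.\ that $h \circ \cc{f} = g \circ h$. This is where I expect the real work. Both $g \circ h$ and $h \circ \cc{f}$ are non-expanding maps $\cc{\cM} \to \cN^*$, and on $M$ they each agree with $f$ followed by the inclusion $M \subseteq N^*$ (using that $h$ fixes $M$, that $g$ extends $f$, and that $\cc{f}$ extends $f$). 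Since $\cN^*$ is Cauchy complete, the uniqueness clause of \cref{Cauchy completion is the completion}(b), applied to the non-expanding map $M \to N^*$ determined by $f$, forces these two extensions to coincide; hence $h$ is a homomorphism of dynamical systems and (d) holds.

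The point deserving the most care, and the one I would isolate as the main obstacle, is making the uniqueness argument of the previous paragraph rigorous when the ambient monoid $\cX^*$ of $\cN^*$ differs from $\mcomp{\cW}$. The resolution is that the metric-space homomorphism $h$ produced by \cref{Relativization of complete generalized metric space} already transports $\mcomp{\cW}$-distances on $\cc{\cM}$ compatibly into $\cX^*$-distances on $\cN^*$, so the estimate bounding $\dist(j_0(x), j_1(x))$ by $\dist(x, x(a)) + \dist(x(a), x)$ in the proof of \cref{Cauchy completion is the completion}(b) still drives the two candidate extensions together as $a$ shrinks; nothing beyond Cauchy-completeness of the \emph{target} $\cN^*$ is required. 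Once this is observed, the proposition follows by assembling the four verified clauses.
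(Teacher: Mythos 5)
Your proposal is correct and follows essentially the same route as the paper, which simply declares the result immediate from \cref{Relativization of complete generalized metric space} and \cref{Non-expanding maps have unique extensions to Cauchy complete generalized metric spaces}; you have merely written out the clause-by-clause verification that the paper leaves implicit, with the intertwining of the dynamics in clause (d) handled by the same uniqueness-of-extension argument the paper relies on.
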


\begin{definition}
We say a generalized dynamical system $(\cM, f, \cW)$ \defn{has a fixed point} if there is an $x \in M$ such that $f(x) = x$. 
\end{definition}

The main focus of this paper will be to study when having a fixed point is absolute for a generalized dynamical system.

\section{Absoluteness}
\label{Section:Absoluteness}

In this section we consider when the existence of a fixed point is absolute. For the rest of the paper let $V_0 \subseteq V_1$ be models of set theory, let $(\cM_0, f_0, \cW_0) \in V_0$ be a generalized dynamical system, and let $(\cM_1, f_1, \cW_1)\in V_1$ be its relativization to $V_1$ for being a generalized dynamical system. 

\begin{proposition}
If $(\cM_0, f_0)$ has a fixed point then so does $(\cM_1, f_1)$. 
\end{proposition}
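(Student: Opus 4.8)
The plan is to observe that upward absoluteness is essentially built into the notion of relativization: the relativized system $(\cM_1, f_1, \cW_1)$ contains the original system as a substructure on whose points $f_1$ still acts as $f_0$, so a fixed point of $(\cM_0, f_0)$ survives unchanged in $V_1$. Thus the whole argument reduces to unpacking the two facts that the original underlying set and the original map are both preserved under relativization.

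First I would recall that, by the proposition immediately preceding this one, the relativization $(\cM_1, f_1, \cW_1)$ is exactly $(\cc{\cM_0}, \cc{f_0}, \mcomp{\cW_0})^{V_1}$. In particular, whether or not $\cW_0$ is continuous at $0$, the construction of the Cauchy completion places $M_0$ inside $\cc{M_0}$ as a subset (in the continuous case by \cref{Cauchy completion with continuous distance monoids}, and in the non-continuous case because the underlying set is literally unchanged). Equivalently, this is \cref{Definition:Relativization}~(b), which guarantees $M_0 \subseteq M_1$ with the identity map a homomorphism. Hence any $x \in M_0$ is automatically an element of $M_1$. Next I would invoke \cref{Non-expanding maps have unique extensions to Cauchy complete generalized metric spaces}~(a): applied in $V_1$ to the non-expanding map $f_0 \: \cM_0 \to \cM_0$, it yields a unique extension $\cc{f_0} = f_1$ agreeing with $f_0$ on $M_0$, i.e. $f_1 \rest[M_0] = f_0$.

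Combining these two facts finishes the proof: if $x \in M_0$ satisfies $f_0(x) = x$, then $x \in M_1$ and $f_1(x) = f_0(x) = x$, so $x$ is a fixed point of $(\cM_1, f_1)$. There is no genuine obstacle here, since all the real work has already been carried out in establishing that relativization preserves the original points and that the extended map restricts to the original map on $M_0$. The only point one must be careful about is to confirm that $f_1$ really does restrict to $f_0$, rather than to some other non-expanding extension; this is precisely the uniqueness clause of \cref{Non-expanding maps have unique extensions to Cauchy complete generalized metric spaces}, which pins down $f_1 \rest[M_0]$ as $f_0$ and thereby preserves the defining equation $f(x) = x$.
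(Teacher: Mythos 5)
Your proposal is correct and follows essentially the same route as the paper: the paper's proof is exactly the observation that $M_0 \subseteq M_1$ and $f_1$ agrees with $f_0$ on $M_0$, so a fixed point of $f_0$ is a fixed point of $f_1$. Your additional care in justifying $f_1 \rest[M_0] = f_0$ via the uniqueness clause of \cref{Non-expanding maps have unique extensions to Cauchy complete generalized metric spaces} is a reasonable elaboration of what the paper leaves implicit.
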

\begin{proof}
This is because $\cM_0 \subseteq \cM_1$ and so if $x \in \cM_0$ with $f_0(x) = x$ we also have $x \in \cM_1$ with $f_1(x) = x$. 
\end{proof}

In other words a generalized dynamical system having a fixed point is upwards absolute.  

\begin{proposition}
\label{Upwards absoluteness when distance monoid is not continuous at 0}
Suppose $\cW_0$ is not continuous at $0$. Then $(\cM_0, f_0)$ has a fixed point if and only if $(\cM_1, f_1)$ has a fixed point. 
\end{proposition}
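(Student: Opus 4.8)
The forward implication is exactly the preceding proposition (upward absoluteness), so the real content is the converse: assuming $(\cM_1, f_1)$ has a fixed point, produce one for $(\cM_0, f_0)$. The plan is to show that when $\cW_0$ is not continuous at $0$ the relativization is \emph{trivial} at the level of underlying sets, i.e. $M_1 = M_0$ and $f_1 = f_0$; the equivalence is then immediate, since a point is fixed by $f_1$ iff it lies in $M_0$ and is fixed by $f_0$. By \cref{Relativization of complete generalized metric space} we may write $(\cM_1, \cW_1) = (\cc{\cM_0}, \mcomp{\cW_0})^{V_1}$ with $f_1 = \cc{f_0}^{V_1}$, so everything reduces to understanding $\cW_1 = \mcomp{\cW_0}^{V_1}$ and the $V_1$-Cauchy completion of $\cM_0$.

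The key step is to show that $\cW_1$ is again not continuous at $0$. First, working in $V_0$, I would show that a complete distance monoid that is not continuous at $0$ has a least nonzero element. Setting $\delta = \bigwedge \nzero{W_0}$ (which exists by completeness), the distributivity axiom of \cref{Definition: Linear Ordered Monoid} gives $\bigwedge_{a, b \in \nzero{W_0}} a + b = \bigwedge_{b \in \nzero{W_0}}(b + \delta) = \delta + \delta$; since $\cW_0$ is not continuous at $0$ the left-hand side is nonzero, forcing $\delta > 0$, so $\epsilon_0 \defas \delta = \min \nzero{W_0}$. Being the least nonzero element is a statement about the fixed data $(W_0, \leq)$ and hence is absolute, so $\epsilon_0$ is still least in $\nzero{W_0}$ in $V_1$; and since $0$ and $\epsilon_0$ are consecutive, the Dedekind–MacNeille completion introduces no element strictly between them (it fills only cuts not already realized), so $\epsilon_0$ remains the least nonzero element of $\cW_1$. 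Consequently $a + b \geq \epsilon_0 > 0$ for all nonzero $a, b \in W_1$, whence $\bigwedge_{a,b \in \nzero{W_1}} a + b \geq \epsilon_0 > 0$ and $\cW_1$ is not continuous at $0$.

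Given this, \cref{Non-continuous distance monoids give rise to Cauchy complete spaces} (applied in $V_1$) shows that $\cM_0$, viewed as a $\cW_1$-metric space, has no Cauchy sequences and is already Cauchy complete; hence its $V_1$-Cauchy completion has underlying set $M_0$, i.e. $M_1 = M_0$. Then $f_1 = \cc{f_0}$ agrees with $f_0$ on $M_0 = M_1$ by \cref{Non-expanding maps have unique extensions to Cauchy complete generalized metric spaces}, so $f_1 = f_0$, and the two systems have exactly the same fixed points. I expect the main obstacle to be the middle step — verifying that passing to the completion $\mcomp{\cW_0}^{V_1}$ cannot manufacture new infinitesimals below $\epsilon_0$ and thereby turn a monoid not continuous at $0$ into one that is. The least-nonzero-element argument, together with the fact that Dedekind–MacNeille completion preserves consecutive pairs, is precisely what rules this out.
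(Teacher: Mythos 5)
Your proof is correct and takes essentially the same route as the paper, whose entire argument is to cite \cref{Relativization of complete generalized metric space} and conclude $(\cM_0, f_0) = (\cM_1, f_1)$ because a non-continuous-at-$0$ monoid admits no Cauchy sequences. The extra work you do — showing via $\delta = \bigwedge \nzero{W_0}$ that $\cW_0$ has a least nonzero element which survives the Dedekind–MacNeille completion in $V_1$, so that $\mcomp{\cW_0}^{V_1}$ is still not continuous at $0$ — is a legitimate filling-in of a step the paper leaves implicit rather than a different argument.
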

\begin{proof}
By \cref{Relativization of complete generalized metric space} we have $(\cM_0, f_0) = (\cM_1, f_1)$. The result then follows. 
\end{proof}

\begin{theorem}
\label{Upwards absoluteness for coinitial=omega}
Suppose $\cW_0$ is a continuous distance monoid with $\coinit{\cW_0} = \w$. Then $(\cM_0, f_0)$ has a fixed point if and only if $(\cM_1, f_1)$ has a fixed point. 
\end{theorem}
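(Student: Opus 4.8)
The forward implication is exactly the preceding upward-absoluteness result, so the real content is the converse: given a fixed point of $(\cM_1, f_1)$ in $V_1$, I must produce one for $(\cM_0, f_0)$ already inside $V_0$. Fix $x \in M_1$ with $f_1(x) = x$. The naive plan --- represent $x$ by a Cauchy sequence drawn from $M_0$ and invoke Cauchy completeness of $\cM_0$ in $V_0$ --- fails, because that sequence is assembled in $V_1$ and may not belong to $V_0$; this non-membership is the central obstacle. My plan is instead to encode the \emph{search} for a fixed point as a tree $T$ that lives in $V_0$, whose infinite branches are finite-approximation Cauchy sequences converging to genuine fixed points, then to exhibit an infinite branch of $T$ in $V_1$ using $x$, and finally to transfer the branch back to $V_0$ via the upward absoluteness of well-foundedness.

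Since $\coinit{\cW_0} = \w$, applying \cref{Nice initial sequences exist}(a) with $n = 4$ furnishes, inside $V_0$, an initial sequence $\alpha\:\w \to \nzero{W_0}$ with $4 \cdot \alpha(k+1) \leq \alpha(k)$. In $V_0$ let $T$ be the tree, ordered by end-extension, whose nodes are the finite sequences $(y_0, \dots, y_k)$ from $M_0$ satisfying $\dist[\cM_0](y_i, y_j) \vee \dist[\cM_0](y_j, y_i) \leq \alpha(i) + \alpha(j)$ for all $i, j \leq k$ and $\dist[\cM_0](y_j, f_0(y_j)) \vee \dist[\cM_0](f_0(y_j), y_j) \leq \alpha(j)$ for all $j \leq k$. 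An infinite branch $(y_k)_{k \in \w}$ of $T$ determines a Cauchy sequence on $\cM_0$ via $\alpha(k) \mapsto y_k$ (the displayed tolerance is exactly the Cauchy condition, and $\bigwedge_{a, b \in \range(\alpha)} a + b = 0$ because $\alpha$ is initial and $\cW_0$ is continuous at $0$); by Cauchy completeness of $\cM_0$ in $V_0$ it converges to some $y^* \in M_0$, and the bounds $\dist[\cM_0](y_k, f_0(y_k)) \leq \alpha(k)$ together with the non-expansiveness of $f_0$ force $\dist[\cM_0](y^*, f_0(y^*)) = \dist[\cM_0](f_0(y^*), y^*) = 0$, i.e.\ $f_0(y^*) = y^*$. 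Thus it suffices to show that $T$ is ill-founded in $V_0$.

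To see that $T$ is ill-founded in $V_1$, I work there. As $x$ lies in the Cauchy completion $M_1$ of $M_0$, the set $M_0$ is dense about $x$, so for each $k$ I may choose $y_k \in M_0$ with $\dist[\cM_1](y_k, x) \vee \dist[\cM_1](x, y_k) \leq \alpha(k+1)$. Using $f_1(x) = x$ and that $f_1$ is non-expanding and extends $f_0$, I get $\dist[\cM_1](y_k, f_0(y_k)) \leq \dist[\cM_1](y_k, x) + \dist[\cM_1](x, f_1(y_k)) \leq 2 \cdot \alpha(k+1) \leq \alpha(k)$, and likewise $\dist[\cM_1](y_i, y_k) \leq \alpha(i+1) + \alpha(k+1) \leq \alpha(i) + \alpha(k)$, together with the symmetric bounds. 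Hence every initial segment of $(y_k)_{k \in \w}$ is a node of $T$, so $(y_k)_{k \in \w}$ is an infinite branch of $T$ in $V_1$.

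Finally, $T \in V_0$. Were $T$ well-founded in $V_0$, a rank function $\rho\:T \to \ORD$ would exist in $V_0$, and since $V_0 \subseteq V_1$ this same $\rho$ would witness well-foundedness of $T$ in $V_1$, contradicting the branch just constructed. Therefore $T$ is ill-founded in $V_0$, and any infinite branch yields, as in the second paragraph, a fixed point of $f_0$ in $M_0$. The crux is this exchange of quantifiers: replacing the $V_0$-external object (a limit sequence for $x$) by ill-foundedness of a tree that is internal to $V_0$, for which only the easy (upward) direction of absoluteness of well-foundedness is needed. The hypothesis $\coinit{\cW_0} = \w$ is precisely what lets the approximating data be organized into a tree of \emph{finite} sequences indexed by $\w$, so that this absoluteness argument applies.
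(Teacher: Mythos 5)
Your proposal is correct and follows essentially the same route as the paper: both encode approximate fixed points as a tree of finite sequences from the dense set $M_0$ built from a nice initial $\w$-sequence, show that an infinite branch yields a fixed point via Cauchy completeness while a fixed point in the completion yields a branch via density and non-expansiveness, and then transfer between $V_0$ and $V_1$ using the absoluteness of well-foundedness of a tree lying in $V_0$. The only cosmetic difference is that you make the rank-function absoluteness step explicit where the paper leaves it implicit after observing $(T_{M_0, f_0})^{V_0} = (T_{M_0, f_0})^{V_1}$.
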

\begin{proof}
By \cref{Nice initial sequences exist} (a) there is an initial sequence $\alpha\:\w^{op} \to \nzero{W_0}$ such that $(\forall n \in \w)\, 3 \cdot \alpha(n+1) \leq \alpha(n)$. Note $\alpha$ is also an initial sequence in $\cW_1$. Let $\alpha^*\:\nzero{W_0} \to \w$ be any map such that $(\forall a \in \nzero{W_0})\, a \geq \alpha(\alpha^*(a))$. We know such a map exists as $\alpha$ is an initial sequence. 

Let $(\cN, g)$ be a generalized dynamical system let $D \subseteq N$ be $D$ dense in $\cN$ with $g[D] \subseteq D$. Let $T_{D, g\rest[D]}$ be the collection sequences $\bx = \<x_i\>_{i \in [n]} \subseteq D$ for some $n \in \w$ such that
\begin{itemize} 
\item $(\forall i,j \in [n])\, \dist[\cN](x_i, x_j) \leq \alpha(i) + \alpha(j)$. 

\item $(\forall i \in [n])\, \dist[\cN](x_i, g(x_i)) \leq \alpha(i) \text{ and } \dist[\cN](g(x_i), x_i) \leq \alpha(i)$. 

\end{itemize}
Let $\bx \sqsubseteq \by$ if $\bx$ is an initial segment of $\bx$. 

\begin{claim}
\label{Upwards absoluteness for coinitial=omega: Claim 1}
The following are equivalent. 
\begin{itemize}
\item[(a)] $(T_{D, g\rest[D]}), \sqsubseteq)$ is ill-founded. 

\item[(b)] There is an $x \in \cN$ such that $g(x) = x$. 
\end{itemize}
\end{claim}
\begin{proof}
Suppose (a) holds and $\<x_i\>_{i \in \w}$ is a sequence such that for all $n \in \w$, $\<x_i\>_{i \in [n]} \in T_{D, g\rest[D]}$. Let $A = \alpha``[\w]$ and let $y\:A \to N$ be the maps where $y(a) = x_{\alpha^{-1}(a)}$.

Now suppose $a, b \in \dom(y)$ with $a = \alpha(i)$ and $b = \alpha(j)$. We then have $y(a) = x_i$ and $y(b) = x_j$. Therefore $\dist[\cM](y(a), y(b)) = \dist[\cM](x_i, x_j) \leq \alpha(i) +\alpha(j) = a+ b$. Therefore $y$ is a Cauchy sequence. 

Suppose $y$ converges to $y^* \in N$. For $n \in \w$ there is a $\beta(n) \in \w$ such that $\beta(n) \leq n$ and $\dist[\cN](y^*, y(\alpha(\beta(n)))) \leq \alpha(n)$. Let $\gamma(n) = \alpha(\beta(n))$. We then have for all $n \in \w$,  
\begin{align*}
\dist[\cN](y^*, g(y^*)) & \leq \dist[\cN](y^*, y(\gamma(n+1))) \\
&+ \dist[\cN](y(\gamma(n+1)), g(y(\gamma(n+1)))) + \dist[\cN](g(y(\gamma(n+1))), g(y^*)) \\
&\leq \alpha(n+1) + \gamma(n+1) + \alpha(n+1) \leq 3 \cdot \alpha(n+1) \leq \alpha(n). 
\end{align*}
A similar argument shows that for all $n \in \w$, $\dist[\cN](g(y^*), y^*) \leq \alpha(n)$. Therefore $y^*$ is a fixed point of $g$ and (b) holds. 
 
Suppose (b) holds and $z \in \cN$ is such that $g(z) = z$. If $z \in D$ then for all $n \in \w$, $\<z\>_{i \in [n]} \in T_{D, g\rest[D]}$ and so $T_{D, g\rest[D]}$ is ill-founded. 

If $z \not \in D$ then, as $D$ is dense, there is a Cauchy sequence $z^+$ with range in $D$ where $z^+$ which converges to $z$. By \cref{Properties of equivalence of Cauchy sequences} (e) we can assume without loss of generality that $z^+$ is maximal. Let $y = z^+ \rest[A]$. For $i \in \w$ let $x_i = y(\alpha(i+1))$. We then have 
\begin{align*}
(\forall i, j \in \w)\, \dist[\cN](x_i, x_j) & = \dist[\cN](y(\alpha(i+1)), y(\alpha(j+1))) \\
&\leq \alpha(i+1) + \alpha(j+1) \leq \alpha(i) + \alpha(j).
\end{align*}
Further 
\begin{align*}
(\forall i \in \w)\, \dist[\cN](x_i, g(x_i)) &= \dist[\cN](y(\alpha(i)), g(y(\alpha(i)))) \\
&\leq \dist[\cN](y(\alpha(i)), z) + \dist[\cN](z, g(z)) + \dist[\cN](g(z), g(y(\alpha(i)))) \\
&\leq \alpha(i+1) + \alpha(i+1) \leq \alpha(i).
\end{align*}
A similar argument shows $(\forall i \in \w)\, \dist[\cM](g(x_i), x_i) \leq \alpha(i)$. 

Therefore for all $n \in \w$, $\<x_i\>_{i \in [n]} \in T_{D, g\rest[D]}$ and $T_{D, g\rest[D]}$ is ill-founded. Hence (a) holds.  
\end{proof}

Note that $M_0$ is dense in both $\cM_0$ and $\cM_1$ by \cref{Dense preserved by taking Cauchy completion}. Further, as the trees only depend on the generalized metric space restricted to the dense set we have $(T_{M_0, f_0})^{V_0} = (T_{M_0, f_0})^{V_1}$. 
Therefore by \cref{Upwards absoluteness for coinitial=omega: Claim 1} we have $(\cM_0, f_0)$ has a fixed point if and only if $(\cM_1, f_1)$ does. 
\end{proof}

\section{Non-Absoluteness}
\label{Section:Non-Absoluteness}

We now consider the case when the coinitiality of $\cW$ is uncountable. In this case the existence of fixed points need not be absolute. In particular we will give a concrete example of such a $\cW$-metirc space for any continuous distance space $\cW$ of coinitiality $\kappa > \w$.

\begin{definition}
Suppose $\kappa$ is an infinite ordinal. A \defn{$\kappa$-tree} is a partial order $\cT = (T, \leq)$ such that the following hold.
\begin{itemize}
\item There is a unique minimal element, called the \defn{root}.

\item For all $a \in T$, $\{b \in T \st b < a\}$ is well-founded with an order type an ordinal in $\kappa$. 
\end{itemize}

We let the \defn{level} of $a$, denoted $\lev{a}$, be the order type of $\{b \in T \st b < a\}$. If $i < \lev{a}$ we let $a\rest[i]$ be the unique element such that $a\rest[i] \leq a$ and $\lev{a\rest[i]} = i$. 

For $a, b \in T$ with $a \neq b$ let $\join(a, b) = \bigcup \{\lev{c} \st c \leq a \And c \leq b\}$. For $a \in T$ let $\join(a, a) = \kappa$. 

We call a tree \defn{pruned} if for all $a \in T$ and all $\beta < \kappa$ with $\lev{a} < \beta$ there is a $b \in T$ with $\lev{b} = \beta$ and $a \leq b$. 
\end{definition}

\begin{definition}
A \defn{path} through a $\kappa$-tree is a map $p\:\kappa \to T$ such that the following hold. 
\begin{itemize}
\item For all $\alpha \in \kappa$, $\lev{p(\alpha)} = \alpha$. 

\item For all $\alpha < \beta \in \kappa$, $p(\alpha) < p(\beta)$. 

\end{itemize}

We let $\paths[\cT]$ be the collection of paths through $T$. For $p, q \in \paths[\cT]$ let $\join(p, q) = \bigcup \{\join(a, b) \st a \in p, b \in q\}$. 

If $p \in \paths[\cT]$ and $q \in T$ let $\join(p, q) = \join(q, p) = \bigcup \{\join(a, q) \st a \in p\}$. 
\end{definition}

The following shows that if $\cofinal{\kappa} = \w$ then every pruned $\kappa$-tree must have a path. This is something that need not be the case if $\cofinal{\kappa} > \w$.

\begin{lemma}
\label{Trees with cf(kappa) = w have paths}
Suppose $\cT$ is a pruned $\kappa$-tree and $\cofinal{\kappa} = \w$. Then $\paths[\cT] \neq \emptyset$. 
\end{lemma}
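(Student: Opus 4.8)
The plan is to run a tree-theoretic König argument, exploiting $\cofinal{\kappa} = \w$ so that I can reach arbitrarily high levels in only countably many extension steps, each step licensed by the pruning hypothesis. First I would fix a strictly increasing sequence $\<\kappa_n\>_{n\in\w}$ of nonzero ordinals below $\kappa$ with $\sup_{n\in\w}\kappa_n = \kappa$; such a sequence exists precisely because $\cofinal{\kappa} = \w$ (and hence $\kappa$ is a limit ordinal $\geq \w$). I would then construct an increasing chain $\<a_n\>_{n\in\w}$ in $T$ with $\lev{a_n} = \kappa_n$ by recursion: letting $r$ be the root (of level $0$), apply pruning to $r$ and $\beta = \kappa_0 < \kappa$ to get $a_0$ at level $\kappa_0$; given $a_{n}$ with $\lev{a_n} = \kappa_n < \kappa_{n+1}$, apply pruning to $a_n$ and $\beta = \kappa_{n+1} < \kappa$ to obtain $a_{n+1}$ at level $\kappa_{n+1}$ with $a_n \leq a_{n+1}$. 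This recursion uses only countable choice and yields $a_0 \leq a_1 \leq \cdots$ with $\lev{a_n} = \kappa_n$.

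Next I would assemble the chain into a path. For each $\alpha < \kappa$, cofinality of $\<\kappa_n\>$ gives some $n$ with $\alpha < \kappa_n$, and I would define $p(\alpha) = a_n\rest[\alpha]$. The one point needing care is well-definedness: if $\alpha < \kappa_m$ with $m \leq n$, then $a_m \leq a_n$ together with $\lev{a_m} = \kappa_m$ forces $a_m = a_n\rest[\kappa_m]$ by uniqueness of restrictions, whence $a_m\rest[\alpha] = (a_n\rest[\kappa_m])\rest[\alpha] = a_n\rest[\alpha]$. Thus $p$ is a well-defined map on all of $\kappa$ with $\lev{p(\alpha)} = \alpha$ by construction.

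Finally I would verify monotonicity: given $\alpha < \beta < \kappa$, choose $n$ with $\beta < \kappa_n$, so $p(\alpha) = a_n\rest[\alpha]$ and $p(\beta) = a_n\rest[\beta]$ are both restrictions of the single element $a_n$; since the predecessors of $a_n$ form a well-ordered chain and these two restrictions sit at the distinct levels $\alpha < \beta$, we get $p(\alpha) < p(\beta)$. Hence $p \in \paths[\cT]$ and $\paths[\cT] \neq \emptyset$. I do not expect a genuine obstacle here—the argument is exactly König's lemma for trees, and the real content is that countable cofinality is what makes it succeed: the countable, pruning-driven recursion meets every level below $\kappa$. (When $\cofinal{\kappa} > \w$ the analogous recursion can stall at a limit stage of uncountable cofinality, which is why the conclusion may fail, as the following section shows.)
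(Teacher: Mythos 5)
Your proposal is correct and follows essentially the same route as the paper's proof: fix a countable cofinal sequence of levels, use prunedness to recursively build an increasing chain hitting those levels, and define the path by taking restrictions. The only difference is that you spell out the well-definedness and monotonicity checks that the paper leaves as ``immediate.''
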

\begin{proof}
Let $(\gamma_i)_{i \in \w}$ be a cofinal sequence in $\kappa$. Let $x_0$ be any element in $T$ with $\lev{x_0} = \gamma_0$. For $n\in \w$ with $x_n$ defined let $x_{n+1}$ be any element of $T$ with $x_{n} \leq x_{n+1}$ and $\lev{x_{n+1}} = \gamma_{n+1}$. Note such an $x_{n+1}$ exists as $T$ is pruned. Let $y\:\kappa \to T$ be the map where, if $\gamma_n \leq \zeta < \gamma_{n+1}$ then $y(\zeta) = x_{n+1}\rest[\zeta]$. It is then immediate that $y$ is a path through $\cT$. 
\end{proof}

We will now show how we turn any $\coinit{\cW}$-tree in a $\cW$-metric space. 

\begin{definition}
Suppose 
\begin{itemize}
\item $\cW$ is a continuous distance monoid and $\cT$ is a $\kappa$-tree. 

\item $\alpha$ is an initial sequence with $\dom(\alpha) = \kappa^{op}$ and for all $i \in \kappa$, $2 \cdot \alpha(i+1) \leq \alpha(i)$. 
\end{itemize}

Let $\cT_{\alpha} = (T \cup \paths[\cT], \dist[\cT_\alpha])$ where 
\[
(\forall x \neq y \in T)\, \dist[\cT_\alpha](x, y) = \alpha(\join(x, y)). 
\]
\end{definition}

Note $\dist[\cT_\alpha]$ is symmetric so $\dist[\cT_\alpha](x, y) = 0$ if and only if $x = y$. 

\begin{lemma}
\label{Trees with paths are Cauchy complete with T dense}
$\cT_\alpha$ is a Cauchy complete $\cW$-metric space with $T$ a dense subset. 
\end{lemma}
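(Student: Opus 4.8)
The plan is to check the three requirements in turn: that $\cT_\alpha$ satisfies the $\cW$-metric axioms, that every Cauchy sequence converges, and that $T$ is dense. Everything rests on the fact that $\dist[\cT_\alpha]$ is an ultrametric induced by the tree order. The key combinatorial input is the strong join inequality
\[
\join(x, z) \geq \min(\join(x, y), \join(y, z)),
\]
valid for all $x, y, z \in T \cup \paths[\cT]$: if $x, y$ branch no earlier than level $\mu$ and $y, z$ branch no earlier than level $\nu$, then $x, z$ branch no earlier than $\min(\mu, \nu)$. First I would establish this by unwinding the definition of $\join$ in the three cases (two nodes, a node and a path, two paths); the node case is immediate because the common ancestors of two nodes form an initial segment of a well-ordered chain, and the path cases follow by passing to suprema. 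Since $\alpha$ is order preserving on $\kappa^{op}$ it is order reversing as a map out of $\kappa$, so the inequality gives $\dist[\cT_\alpha](x,z) = \alpha(\join(x,z)) \leq \alpha(\min(\join(x,y),\join(y,z))) = \alpha(\join(x,y)) \vee \alpha(\join(y,z)) \leq \dist[\cT_\alpha](x,y) + \dist[\cT_\alpha](y,z)$, using $a \vee b \leq a + b$ in a positively ordered monoid. Together with the remark that $\dist[\cT_\alpha]$ is symmetric and vanishes exactly on the diagonal, this shows $\cT_\alpha$ is a $\cW$-metric space.

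For density I would show each point is a limit of nodes. Nodes are limits of their own constant sequences, so the content is the paths. Given $p \in \paths[\cT]$, for each $a \in \nzero{W}$ use that $\alpha$ is an initial sequence to pick a level $\gamma_a < \kappa$ with $\alpha(\gamma_a) \leq a$, and set $q(a) = p(\gamma_a) \in T$. Since two nodes on a common branch meet at the smaller of their levels, $\dist[\cT_\alpha](q(a), q(b)) = \alpha(\min(\gamma_a,\gamma_b)) = \alpha(\gamma_a) \vee \alpha(\gamma_b) \leq a + b$, so $q$ is a Cauchy sequence; and $\join(p, p(\gamma_a)) = \gamma_a$ gives $\dist[\cT_\alpha](p, q(a)) = \alpha(\gamma_a) \leq a$, so $q$ converges to $p$.

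For Cauchy completeness I would take a maximal Cauchy sequence and, using density and \cref{Properties of equivalence of Cauchy sequences}, replace it by an equivalent Cauchy sequence $s$ valued in $T$ (for each $a$ choose a node within distance $a$ of the $a$-th term). The crucial step is a level-by-level stabilization: for each $\gamma < \kappa$, whenever $a + b < \alpha(\gamma)$ the Cauchy condition forces $\alpha(\join(s(a), s(b))) < \alpha(\gamma)$, hence $\join(s(a), s(b)) > \gamma$, hence $s(a)\rest[\gamma] = s(b)\rest[\gamma]$; since arbitrarily small $a, b$ exist, the truncations $s(a)\rest[\gamma]$ stabilize to a single node $c_\gamma$ of level $\gamma$, and the family $(c_\gamma)_{\gamma < \kappa}$ is coherent. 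Either $s$ is eventually equal to a fixed node $c$ — and because $\join(x, c) \leq \lev{c} < \kappa$ for $x \neq c$, convergence to \emph{any} node forces exactly this, so $s$ converges to $c \in T$ — or $(c_\gamma)_{\gamma}$ defines an honest path $p \in \paths[\cT]$ and, since $s(a)$ agrees with $p$ up to a level tending to $\kappa$ as $a \to 0$, $s$ converges to $p$. Either way the limit lies in $T \cup \paths[\cT]$, so $\cT_\alpha$ is Cauchy complete; combined with the previous paragraph and the universal property of the Cauchy completion (\cref{Cauchy completion is the completion}) this identifies the Cauchy completion of $(T, \dist[\cT_\alpha]\rest[T])$ with $\cT_\alpha$ over $T$, i.e. $T$ is dense.

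The main obstacle I anticipate is the completeness argument: making the dichotomy between \emph{stabilizing to a node} and \emph{determining a path} precise, and verifying convergence in the exact sense of the $\bigvee\bigwedge$ definitions rather than informally. Two points need care: the behavior of $\join$ at limit levels, so that the coherent family $(c_\gamma)$ genuinely assembles into a path and $\join(s(a), p)$ really tends to $\kappa$; and the reduction of an arbitrary maximal Cauchy sequence, whose entries may themselves be paths, to one supported on $T$ within the same equivalence class. I expect the niceness hypothesis $2 \cdot \alpha(i+1) \leq \alpha(i)$ to be unnecessary here — it merely guarantees room to spare — since the order reversal of $\alpha$ together with the coinitiality of its range is what drives every estimate.
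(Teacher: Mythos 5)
Your proposal is correct and follows essentially the same route as the paper: the limit of a Cauchy sequence is extracted as the coherent family of tree-truncations of its terms, which is either (eventually) a fixed node of $T$ or assembles into a path. The paper organizes the completeness step slightly differently --- it restricts the sequence to $\range(\alpha)$, indexes by $\kappa$, and sets $x(j) = p(j+2)\rest[j]$ directly (using the niceness of $\alpha$ for the key estimate $\alpha(i+1)+\alpha(j)\leq\alpha(i)$), whereas you first pass to a node-valued representative, where your recipe as stated only gives $\dist[\cT_\alpha](s(a),s(b))\leq 2\cdot a + 2\cdot b$ and so needs the small repair you already flag --- but the underlying idea is identical, and you additionally verify the ultrametric triangle inequality and the density claim that the paper leaves as immediate.
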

\begin{proof}
It is immediate from the definition fo $\dist[\cT_\alpha]$ that $T$ is a dense subset of $\cT_\alpha$. To see that $\cT_\alpha$ is Cauchy complete suppose $p\:\nzero{W} \to T \cup \paths[\cT]$ is a Cauchy sequence. As $\alpha$ is an initial sequence $p\rest[\range(\alpha)]$ is also a Cauchy sequence. It is then immediate that $p \equiv_{\cT_\alpha} p\rest[\range(\alpha)]$. 

For $i < j \in \kappa$ we have $\dist[\cT_\alpha](p(i+1), p(j)) \leq \alpha(i+1) + \alpha(j) \leq \alpha(i)$. Therefore for $0 < j \in \kappa$ if $p(j)  \in T$ we have $\lev{p(j)} \geq \sup\{i < j \st i+1 < j\}$. 

For $j \in \kappa$ if $p(j+2) \in T$ let $x(j) = p(j+2)\rest[j]$ and $x(j) = p(j+2)(j)$ otherwise. It is then straightforward to check that $p$ converges to $x$. In particular $\cT_\alpha$ is Cauchy complete. 
\end{proof}

\begin{lemma}
\label{Increasing map of pruned trees fixes all paths}
Suppose $\cT$ is a pruned tree. Let $f\:T \to T$ be a map such that $(\forall t \in T)\, f(t) > t$. 
\begin{itemize}
\item[(a)] $f$ is a non-expanding map from $(T, \dist[\cT_\alpha])$ to $(T, \dist[\cT_\alpha])$. 

\item[(b)] If $f^*$ is the unique non-expanding map from $\cT_\alpha$ to $\cT_\alpha$ extending $f$ then $(\forall p \in \paths[\cT])\, f(p) = p$. 
\end{itemize}
\end{lemma}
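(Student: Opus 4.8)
The plan is to establish (a) by showing $f$ never decreases joins, and then to derive (b) from a single triangle-inequality estimate that uses only that $f^*$ is non-expanding and extends $f$.

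For (a), the driving observation is that $f(t) > t$ makes $t$ an ancestor of $f(t)$, so common ancestors survive applying $f$. I would fix $x, y \in T$ and first show $\join(f(x), f(y)) \ge \join(x, y)$: if $f(x) = f(y)$ then $\dist[\cT_\alpha](f(x), f(y)) = 0$ and there is nothing to prove, while if $f(x) \ne f(y)$ then any $c$ with $c \le x$ and $c \le y$ satisfies $c \le x < f(x)$ and $c \le y < f(y)$, hence $c \le f(x)$ and $c \le f(y)$; thus every common ancestor of $x$ and $y$ is a common ancestor of $f(x)$ and $f(y)$, and taking the supremum of levels yields $\join(x,y) \le \join(f(x), f(y))$. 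Because $\alpha$ is order-preserving on $\kappa^{op}$, it is order-reversing for the usual order on $\kappa$, so a larger join produces a smaller value of $\alpha$; therefore $\dist[\cT_\alpha](f(x), f(y)) = \alpha(\join(f(x), f(y))) \le \alpha(\join(x, y)) = \dist[\cT_\alpha](x, y)$, proving $f$ is non-expanding. Combined with \cref{Trees with paths are Cauchy complete with T dense} and \cref{Non-expanding maps have unique extensions to Cauchy complete generalized metric spaces}, this guarantees that the extension $f^*$ in the statement exists, is non-expanding, and satisfies $f^*(p(i)) = f(p(i))$ for nodes $p(i) \in T$.

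For (b) I would fix a path $p \in \paths[\cT]$ and show $f^*(p) = p$ (the $f(p)$ in the statement should read $f^*(p)$, since $f$ acts only on $T$). For each $i \in \kappa$ the node $p(i)$ lies on $p$ at level $i$, so the common initial segment of $p$ and $p(i)$ reaches level $i$ and hence $\dist[\cT_\alpha](p, p(i)) \le \alpha(i)$; moreover, since $p(i) \le f(p(i))$ the node $p(i)$ is a common ancestor of $f(p(i))$ and $p$, giving $\join(f(p(i)), p) \ge i$ and so $\dist[\cT_\alpha](f(p(i)), p) \le \alpha(i)$. Using that $f^*$ is non-expanding together with $f^*(p(i)) = f(p(i))$, I get $\dist[\cT_\alpha](f^*(p), f(p(i))) \le \dist[\cT_\alpha](p, p(i)) \le \alpha(i)$. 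Transitivity of the metric on $\cT_\alpha$ then yields
\[
\dist[\cT_\alpha](f^*(p), p) \le \dist[\cT_\alpha](f^*(p), f(p(i))) + \dist[\cT_\alpha](f(p(i)), p) \le 2 \cdot \alpha(i),
\]
and the symmetric estimate bounds $\dist[\cT_\alpha](p, f^*(p))$ the same way. Since $2 \cdot \alpha(i+1) \le \alpha(i)$ and $\bigwedge_{i \in \kappa} \alpha(i) = 0$, we have $\bigwedge_{i \in \kappa} 2 \cdot \alpha(i) = 0$; as the bound holds for every $i$, taking the meet forces $\dist[\cT_\alpha](f^*(p), p) = \dist[\cT_\alpha](p, f^*(p)) = 0$, i.e.\ $f^*(p) = p$.

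The genuinely delicate points are the two join computations, which require unwinding the definition of $\join$ for a node against a node and for a node against a path and keeping straight that $\alpha$ reverses order because its domain is $\kappa^{op}$. The main conceptual obstacle is controlling $f^*$ on the new points $\paths[\cT]$; rather than tracking the extension through representing Cauchy sequences and invoking the dichotomy of \cref{Non-expanding maps have unique extensions to Cauchy complete generalized metric spaces}(b), the argument above sidesteps this by using nothing about $f^*$ beyond non-expansion and agreement with $f$ on $T$. The one remaining thing to check carefully is that $\bigwedge_{i \in \kappa} 2 \cdot \alpha(i) = 0$, which follows from the hypothesis $2 \cdot \alpha(i+1) \le \alpha(i)$ and the fact that $\alpha$ is an initial sequence in the continuous monoid $\cW$.
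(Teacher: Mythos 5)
Your proof is correct and follows essentially the same route as the paper: for (a) the monotonicity of $\join$ under $f$, and for (b) a triangle-inequality estimate against the nodes $p(i)$ combined with non-expansion of the extension and $\bigwedge_{i}\alpha(i)=0$. Your two-term decomposition is a minor improvement in that it only needs the bound $2\cdot\alpha(i)$ (whose meet is $0$ directly), whereas the paper's three-term version implicitly uses $3\cdot\alpha(i+1)\leq\alpha(i)$ when only $2\cdot\alpha(i+1)\leq\alpha(i)$ is assumed; the underlying idea is identical.
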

\begin{proof}
If $a, b \in T$ then $f(a) > a$ and $f(b) > b$ and so $\join(f(a), f(b)) \geq \join(a, b)$. Therefore $\dist[\cT_\alpha](f(a), f(b)) \leq \dist[\cT_\alpha](a, b)$. Hence $f$ is a non-expanding map and (a) holds. 

We now show (b). First note that for any $t \in T$, $\dist[\cT_\alpha](t, f(t)) = \alpha(\lev{t})$. Suppose $p \in \paths[\cT]$. Let $p^*\: \range(\alpha) \to T$ be the map where $p^*(\alpha(i)) = p(i)$ for all $i \in \kappa$. Then $p^*$ is a Cauchy sequence and $p^*$ converges to $p$. But for $i \in \kappa$, 
\begin{align*}
\dist[\cT_\alpha](p, f(p)) & \leq \dist[\cT_\alpha](p, p(i+1)) + \dist[\cT_\alpha](p(i+1), f(p(i+1))) + \dist[\cT_\alpha](f(p(i+1)), f(p)) \\
& \leq \alpha(i+1) + \alpha(i+1) + \alpha(i+1) \leq \alpha(i).
\end{align*}
Therefore $f(p) = p$ and (b) holds as $p$ was arbitrary. 
\end{proof}

In particular if $f\:\cT_\alpha \to \cT_\alpha$ is non-expanding and is such that $f(t) > t$ for all $t \in T$ then $p \in \cT_\alpha$ is a fixed point of $f$ if and only if $p \in \paths[\cT]$. Hence if we can find a tree $\cT$ which has no paths in a model of set theory $V_0$ but has paths in a model of set theory $V_1$ containing $V_0$ then the existence of a fixed point for $(\cT_\alpha,f)$ is not absolute. 

\begin{lemma}
\label{Exists pruned tree with no paths if cf(kappa) > w}
Suppose $\kappa$ is an an ordinal with uncountable cofinality. Then there is a tree $\cS_\kappa = (S_\kappa, \leq)$ which is pruned and which has no paths. 
\end{lemma}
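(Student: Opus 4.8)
The plan is to reduce the problem from height $\kappa$ to height $\lambda := \cofinal{\kappa}$, a regular uncountable cardinal, and then to produce a pruned $\lambda$-tree with no cofinal branch. First I would fix a strictly increasing, continuous, cofinal map $\langle \gamma_i : i < \lambda\rangle$ in $\kappa$ with $\gamma_0 = 0$, so that $\gamma_i = \sup_{j<i}\gamma_j$ at each limit $i$. Given any pruned $\lambda$-tree $\cR = (R, \leq_R)$ with no branch of length $\lambda$, I would \emph{stretch} it to a $\kappa$-tree $\cS_\kappa$ as follows: the nodes are pairs $(r,\zeta)$ with $r \in R$ of $\cR$-level $i$ and $\gamma_i \leq \zeta < \gamma_{i+1}$, placed at $\cS_\kappa$-level $\zeta$, ordered so that the node at level $\zeta' < \zeta$ below $(r,\zeta)$ is $(r\rest[i'], \zeta')$, where $i'$ is the unique index with $\gamma_{i'}\leq \zeta' < \gamma_{i'+1}$. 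Continuity of $\langle\gamma_i\rangle$ guarantees that the predecessors of a node sitting at a limit level $\gamma_i$ cohere to the single $\cR$-node $r$ at level $i$, so $\cS_\kappa$ is genuinely a $\kappa$-tree, and prunedness transfers from $\cR$: to climb from $(r,\zeta)$ into a later block one replaces $r$ by a $\leq_R$-extension, which exists since $\cR$ is pruned. The crucial point is that a path through $\cS_\kappa$, read off at the levels $\gamma_i$, yields a $\leq_R$-increasing sequence $\langle r_i : i<\lambda\rangle$, i.e. a cofinal branch of $\cR$; since $\cR$ has none, $\paths[\cS_\kappa] = \emptyset$. This is exactly the phenomenon that fails when $\cofinal{\kappa} = \w$, cf. \cref{Trees with cf(kappa) = w have paths}.

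It remains to build, for a regular uncountable cardinal $\lambda$, a pruned $\lambda$-tree with no cofinal branch. When $\lambda = \mu^+$ is a successor cardinal (in particular when $\lambda = \aleph_1$, taking $\mu = \aleph_0$) I would let $\cR$ be the tree of all injective functions $s : \alpha \to \mu$ with $\alpha < \lambda$ whose range is \emph{co-large}, meaning $|\mu \setminus \range(s)| = \mu$, ordered by end-extension with $\lev{s} = \alpha$. This family is downward closed and reaches every level $\alpha < \lambda$, since $|\alpha| \leq \mu$. It is pruned: to extend $s$ past its domain, split the size-$\mu$ set $\mu \setminus \range(s)$ into two pieces of size $\mu$, inject the new coordinates into one, and keep the other in reserve, so co-largeness is preserved. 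Finally, a cofinal branch would be the union of a coherent chain of such functions, hence an injection $\lambda = \mu^+ \to \mu$, which is impossible on cardinality grounds; so $\cR$ has no cofinal branch.

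The main obstacle is the case where $\lambda$ is a limit cardinal, i.e. weakly inaccessible: then $|\alpha|$ is unbounded below $\lambda$, there is no single target of size $<\lambda$, and the clean cardinality argument is unavailable (indeed every injection-based scheme admits the identity as a branch). Here the object required is exactly a pruned $\lambda$-Aronszajn tree, from which $\cR$ is obtained by discarding the nodes lacking arbitrarily high extensions. Constructing such a tree is the real content: the natural route is a pressing-down (Fodor) argument arranging the nodes so that any cofinal branch would induce a regressive, finite-to-one function on a stationary set of limit ordinals; but a naive such scheme caps the height at the first regular uncountable cardinal where Fodor's lemma already forbids the configuration, so genuine care is needed to keep the tree simultaneously pruned and branchless through every limit level. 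I expect this to be the crux of the whole argument, with the stretching step and the successor-cardinal construction being routine by comparison.
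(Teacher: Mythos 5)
Your reduction to $\lambda := \cofinal{\kappa}$ via a continuous cofinal sequence, and your construction for successor $\lambda = \mu^+$ (injections into $\mu$ with co-large range, ordered by end-extension), are both correct. But the weakly inaccessible case is a genuine gap, not a routine loose end, and the route you propose for it cannot work in ZFC: a pruned $\lambda$-Aronszajn tree provably fails to exist when $\lambda$ is weakly compact, since such $\lambda$ have the tree property. The idea you are missing is that the lemma places no constraint on the width of levels, so you do not need a \emph{cardinality} obstruction to branches at all; a \emph{cofinality} obstruction suffices, and that one is available uniformly for every $\kappa$ with $\cofinal{\kappa} > \w$.

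Concretely, the paper takes $S_\kappa$ to be the set of pairs $(a,g)$ where $g\:[n+2]\to\kappa$ is a finite increasing sequence with $g(0)=0$ and $g(n) < a \leq g(n+1)$, ordered by $(a_0,g_0)\leq(a_1,g_1)$ iff $a_0\leq a_1$ and $g_1$ extends $g_0$; then $\lev{a,g}=a$. Each node carries only finitely many ``checkpoints'', but its level is trapped below the last one, so to climb past $g(n+1)$ one simply appends a new checkpoint at or above the target level; hence the tree is pruned. On the other hand, any $p\in\paths[\cS_\kappa]$ amalgamates the coherent checkpoint sequences along $p$ into a single map $q\:\w\to\kappa$, and since the node of $p$ at level $\gamma$ satisfies $\gamma\leq g(n+1)\in\range(q)$, the range of $q$ is cofinal in $\kappa$, contradicting $\cofinal{\kappa}>\w$. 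This handles every $\kappa$ of uncountable cofinality in one stroke --- successor, inaccessible, or weakly compact cofinality alike --- and makes both your stretching step and your case split unnecessary. If you wish to keep your architecture, you could substitute this finite-ladder tree for the Aronszajn tree in the limit-cardinal case, but at that point the reduction to $\cofinal{\kappa}$ is doing no work.
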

\begin{proof}
Let $S_\kappa$ be the collection of pair $(a, g)$ such that either $a = 0$ and $g = \{(0, 0)\}$ or the following hold. 
\begin{itemize}
\item For some $n \in \w$, $g\:[n+2] \to \kappa$ is an increasing sequence of ordinals with $g(0) = 0$.  

\item $g(n) < a \leq g(n+1)$. 

\end{itemize}

Define $(a_0,g_0) \leq (a_1, g_1)$ if $g_0$ agrees with $g_1$ on $\dom(g_0)$ and $a_0 \leq a_1$. It is then straightforward to check that $\cS_\kappa =(S_\kappa, \leq)$ is a tree and $\lev{a, g} = a$. In particular $\cS_\kappa$ is a pruned $\kappa$-tree. 

Now suppose $p \in \paths[\cS_\kappa]$ to get a contradiction. Let $q = \{(n, k) \st (\exists \gamma \in \kappa)\, p(\gamma) = (a, g)\text{ and }g(n) = k\}$. It is then straightforward to check that $q\:\w \to \kappa$ is a cofinal sequence, contradicting our assumption on $\kappa$. 
\end{proof}

Putting everything together we have the following. 

\begin{theorem}
Suppose the following hold. 
\begin{itemize}
\item $V_0 \subseteq V_1$ are models of set theory. 

\item $\cW \in V_0$ is a continuous distance monoid with $(\coinit{\cW})^{V_0} > \w$. 

\item $\cofinal{\coinit{\cW}^{V_0}} = \w$.
\end{itemize}
Then there is a $\cW$-dynamical system in $V_0$ which has no fixed point but whose relativization to $V_1$ for being a generalized dynamical system does. 
\end{theorem}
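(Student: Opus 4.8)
The plan is to exploit the single tree construction of this section to realize the cofinality gap between $V_0$ and $V_1$ as the appearance of a new path. Write $\kappa = \coinit{\cW}^{V_0}$. Since $\cW$ is continuous, the coinitiality lemma gives that $\kappa$ is an infinite regular cardinal of $V_0$, and as $\kappa > \w$ it is regular and uncountable \emph{in $V_0$}, so $\cofinal{\kappa}^{V_0} = \kappa > \w$; the hypothesis $\cofinal{\kappa} = \w$ must therefore be read in $V_1$, i.e. $\cofinal{\kappa}^{V_1} = \w$. First I would invoke \cref{Nice initial sequences exist}(b) inside $V_0$ to fix an initial sequence $\alpha\:\kappa^{op} \to \nzero{W}$ with $2 \cdot \alpha(i+1) \leq \alpha(i)$ for every $i \in \kappa$.

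Next, working in $V_0$ where $\cofinal{\kappa} > \w$, I would apply \cref{Exists pruned tree with no paths if cf(kappa) > w} to get a pruned $\kappa$-tree $\cS_\kappa$ with $\paths[\cS_\kappa]^{V_0} = \emptyset$. Form the $\cW$-metric space $(\cS_\kappa)_\alpha$; by \cref{Trees with paths are Cauchy complete with T dense} it is Cauchy complete with $S_\kappa$ dense. Because $\cS_\kappa$ is pruned, each $t \in S_\kappa$ admits an extension of strictly larger level, so I may choose $f\:S_\kappa \to S_\kappa$ with $f(t) > t$ for all $t$; by \cref{Increasing map of pruned trees fixes all paths}(a) this $f$ is non-expanding and extends uniquely to a non-expanding $f^*\:(\cS_\kappa)_\alpha \to (\cS_\kappa)_\alpha$. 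The pair $((\cS_\kappa)_\alpha, f^*)$ is the desired $\cW$-dynamical system of $V_0$, and by the remark following \cref{Increasing map of pruned trees fixes all paths} its fixed points are exactly the elements of $\paths[\cS_\kappa]$; hence it has none in $V_0$.

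It then remains to produce a fixed point in the relativization. The objects $\kappa$ and $\cS_\kappa$ lie in $V_0 \subseteq V_1$; ordinals and the ordering of $\cS_\kappa$ are absolute, and prunedness transfers upward since each required extension already witnesses prunedness in $V_0$. Thus $\cS_\kappa$ is still a pruned $\kappa$-tree in $V_1$, and as $\cofinal{\kappa}^{V_1} = \w$, \cref{Trees with cf(kappa) = w have paths} yields a path $p \in \paths[\cS_\kappa]^{V_1}$. By \cref{Relativization of complete generalized metric space} together with its counterpart for dynamical systems, the relativization of $((\cS_\kappa)_\alpha, f^*, \cW)$ to $V_1$ is its Cauchy completion $(\cc{(\cS_\kappa)_\alpha}, \cc{f^*}, \mcomp{\cW})^{V_1}$; and since $S_\kappa$ is dense in $(\cS_\kappa)_\alpha$ with density absolute by \cref{Dense preserved by taking Cauchy completion}, this completion is exactly $(\cS_\kappa)_\alpha$ recomputed over $\mcomp{\cW}^{V_1}$ in $V_1$. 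Applying \cref{Increasing map of pruned trees fixes all paths}(b) inside $V_1$, where $\alpha$ is still a nice initial sequence of domain $\kappa^{op}$, the map $\cc{f^*}$ fixes every element of $\paths[\cS_\kappa]^{V_1}$, and in particular fixes $p$. So the relativization has a fixed point while the original has none.

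The main obstacle I anticipate is the absoluteness bookkeeping of the last paragraph: confirming that $\alpha$ remains an initial sequence over $\mcomp{\cW}^{V_1}$ so that the $(\cS_\kappa)_\alpha$-construction and \cref{Increasing map of pruned trees fixes all paths} genuinely apply in $V_1$ — this reduces to checking that the Dedekind–MacNeille completion preserves $\bigwedge \range(\alpha) = 0$ — and then identifying the $V_1$-relativization with $(\cS_\kappa)_\alpha$ recomputed in $V_1$ via uniqueness of the Cauchy completion. Everything else is a direct appeal to the lemmas already established; the conceptual heart is simply that the one tree $\cS_\kappa$ is path-free in $V_0$ but acquires a path once $\kappa$ is singularized in $V_1$.
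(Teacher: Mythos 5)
Your proposal is correct and follows essentially the same route as the paper: fix a nice initial sequence via \cref{Nice initial sequences exist}, build $(\cS_\kappa)_\alpha$ from the pruned path-free tree of \cref{Exists pruned tree with no paths if cf(kappa) > w}, take a level-increasing $f$, and use \cref{Trees with cf(kappa) = w have paths} together with \cref{Increasing map of pruned trees fixes all paths} to produce a fixed point in $V_1$. The extra bookkeeping you flag (that $\alpha$ remains an initial sequence in $\mcomp{\cW}^{V_1}$, and that the relativization is the recomputed Cauchy completion) is a legitimate and easily discharged refinement of what the paper leaves implicit.
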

\begin{proof}
By \cref{Nice initial sequences exist} there is a nice $\coinit{\cW}^{V_0}$-initial sequence for $\cW$ in $V_0$. Let $\alpha$ be one such sequence. By \cref{Exists pruned tree with no paths if cf(kappa) > w} there is a $\coinit{\cW}^{V_0}$-tree in $\cT \in V_0$ which is pruned and has no paths. By \cref{Trees with paths are Cauchy complete with T dense} we have $(\cT_\alpha, \dist[\cT_\alpha])$ is Cauchy complete in $V_0$. Let $f_0\:T \to T$ be any map such that $(\forall t \in T)\, t < f_0(t)$. Note such an $f$ exists because $\cT$ is pruned. Further note $(\cT, f_0)$ is a $\cW$-dynamical system in $V_0$ which has no fixed points. 

But by \cref{Trees with cf(kappa) = w have paths} there is a path $p \in \cT_\alpha$ in $V_1$. Further if $(\cT_\alpha^1, f_1, \mcomp{\cW})$ is the relativization of $(\cT_\alpha, f_0, \cW)$ to $V_1$ with respect to being a generalized dynamical system then by \cref{Increasing map of pruned trees fixes all paths} we have $f_1(p) = p$. 
\end{proof}

\section{Future Work}
We end with a couple of directions for future work. 
\begin{itemize}
\item[(Q1)] For which, not necessarily linear, quantales $\cW$ is having a fixed point absolute for all $\cW$-dynamical systems? (where here we are replacing distance monoids with quantales in the obvious way).

\item[(Q2)] For a fixed $\cW$, is there a characterization of those $\cW$-dynamical systems for which having a fixed point is not absolute?
\end{itemize}


\bibliographystyle{amsnomr}
\bibliography{bibliography}

\end{document}